\documentclass[a4paper,twoside,12pt]{article}
\usepackage{amssymb,amsmath,amsthm,amsfonts,latexsym}
\usepackage{graphicx}
\usepackage[pdftex,bookmarks,colorlinks=false]{hyperref}
\usepackage{caption,subcaption}
\usepackage[hmargin=1.25in,vmargin=1.25in]{geometry}
\usepackage[auth-sc-lg,affil-sl]{authblk}
\setcounter{Maxaffil}{3}

\pagestyle{myheadings}
\thispagestyle{empty}

\markboth {\hspace*{-9mm} \centerline{\footnotesize 
    A Study on Topological Integer Additive Set-Labeling of Graphs}
                 }
                { \centerline {\footnotesize 
       N. K. Sudev and K. A. Germina} \hspace*{-9mm}}

\newtheorem{theorem}{Theorem}[section]

\newtheorem{corollary}[theorem] {Corollary}
\newtheorem{definition}[theorem]{Definition}

\newtheorem{observation}[theorem]{Observation}
\newtheorem{problem}[theorem]{Problem}
\newtheorem{proposition}[theorem]{Proposition}
\newtheorem{remark}[theorem]{Remark}

\setlength{\parskip}{2.5pt}

\title{\textbf{\sc A Study on Topological Integer Additive Set-Labeling of Graphs}}

\author{N. K. Sudev}
\affil{\small Department of Mathematics\\ Vidya Academy of Science \& Technology \\ Thalakkottukara, Thrissur - 680501, Kerala, India.\\ E-mail: sudevnk@gmail.com}

\author{K. A. Germina}
\affil{\small PG \& Research Department of Mathematics\\ Mary Matha Arts \& Science College\\Mananthavady, Wayanad-670645, Kerala, India.\\ E-mail: srgerminaka@gmail.com}

\date{}

\begin{document}
\maketitle

\begin{abstract}
A set-labeling of a graph $G$ is an injective function $f:V(G)\to \mathcal{P}(X)$ such that the induced function $f^{\oplus}:E(G)\to \mathcal{P}(X)-\{\emptyset\}$ defined by $f^{\oplus}(uv) = f(u){\oplus}f(v)$ for every $uv{\in} E(G)$, where $X$ is a non-empty finite set and $\mathcal{P}(X)$ be its power set. A set-indexer of $G$ is a set-labeling such that the induced function $f^{\oplus}$ is also injective. A set-indexer $f:V(G)\to \mathcal{P}(X)$ of a given graph $G$ is called a topological set-labeling of $G$ if $f(V(G))$ is a topology of $X$.  An integer additive set-labeling is an injective function $f:V(G)\to \mathcal{P}(\mathbb{N}_0)$, whose associated function $f^+:E(G)\to \mathcal{P}(\mathbb{N}_0)$ is defined by $f^+(uv)=f(u)+f(v), uv\in E(G)$, where $\mathbb{N}_0$ is the set of all non-negative integers. An integer additive set-indexer is an integer additive set-labeling such that the induced function $f^+:E(G) \to \mathcal{P}(\mathbb{N}_0)$ is also injective. In this paper, we extend the concepts of topological set-labeling of graphs to topological integer additive set-labeling of graphs.
\end{abstract}
\textbf{Key words}: Set-labeling of graphs, integer additive set-labeling of graphs, topological integer additive set-labeling of graphs.

\vspace{0.05in}
\noindent \textbf{AMS Subject Classification : 05C78} 

\section{Introduction}

For all  terms and definitions of graphs and graph classes, not defined specifically in this paper, we refer to \cite{BM1}, \cite{BLS}, \cite{FH} and \cite{DBW} and for graph labeling concepts, we refer to \cite{JAG}. For terms and definitions in topology, we further refer to \cite{JD}, \cite{KDJ1} and \cite{JRM}. Unless mentioned otherwise, all graphs considered here are simple, finite and have no isolated vertices. 

Research on graph labeling commenced with the introduction of $\beta$-valuations of graphs in \cite{AR}. Analogous to the number valuations of graphs, the concepts of set-assignments,  set-labelings and set-indexers of graphs are introduced in \cite{BDA1} as follows.

Let $G(V,E)$ be a given graph. Let $X$ be a non-empty set and $\mathcal{P}(X)$ be its power sets. Then, the set-valued function $f:V(G)\to \mathcal{P}(X)$ is called the {\em  set-assignment} of vertices of $G$ respectively. In a similar way, we can define a set assignment of edges of $G$ as a function $g:E(G)\to \mathcal{P}(Y)$ and a set assignment of elements (both vertices and edges) of $G$ as a function $h:V(G)\cup E(G)\to \mathcal{P}(Z)$, where $Y$ and $Z$ are non-empty sets. The term set assignment is used for set-assignment of vertices unless mentioned otherwise. 

A set-assignment of a graph $G$ is said to be a {\em set-labeling} or a {\em set-valuation} of $G$ if it is injective. A graph with a set-labeling $f:V(G)\to \mathcal{P}(X)$ is denoted by $(G,f)$ and is referred to as a {\em set-labeled graph} or a {\em set-valued graph}. 

For a graph $G(V,E)$ and a non-empty set $X$ of cardinality $n$, a {\em set-indexer} of $G$ is defined as an injective set-valued function $f:V(G) \to \mathcal{P}(X)$ such that the function $f^{\oplus}:E(G)\to \mathcal{P}(X)-\{\emptyset\}$ defined by $f^{\oplus}(uv) = f(u ){\oplus}f(v)$ for every $uv{\in} E(G)$ is also injective, where $\mathcal{P}(X)$ is the set of all subsets of $X$ and $\oplus$ is the symmetric difference of sets. A graph that admits a set-indexer is called a {\em set-indexed graph}. It is proved in \cite{BDA1} that every non-empty graph has a set-indexer. 

More studies on set-labeled and set-indexed graphs have been done in \cite{GK1}, \cite{BDA1}, \cite{AGAS} and \cite{BDA2}. Then, the notion of topological set-labeling of a graph is defined in \cite{AGPR} as follows.

Let $G$ be a graph and let $X$ be a non-empty set. A set-labeling $f:V(G)\to \mathcal{P}(X)$  is called a {\em topological set-labeling} of $G$ if $f(V(G))$ is a topology of $X$. A graph $G$ which admits a topological set-labeling is called a {\em topologically set-labeled graph}. More studies on topological set-labeling of different graphs have been done subsequently.

The {\em sumset} of two non-empty sets $A$ and $B$, denoted by $A+B$, is the set defined by $A+B=\{a+b: a\in A, b\in B\}$. For every non-empty set $A$, we have $A+\{0\}=A$. Hence, $\{0\}$ and $A$ are said to be the {\em trivial summands} of the set $A$. If $C=A+B$, where $A$ and $B$ are non-trivial summands of $C$, then $C$ is said to be the {\em non-trivial sumset} of $A$ and $B$. In this paper, by the terms sumsets and summands, we mean non-trivial sumsets and non-trivial summands respectively. 

If any either $A$ or $B$ is countably infinite, then their sumset $A+B$ will also be a countably infinite set. Hence, all sets mentioned in this paper are finite sets.We denote the cardinality of a set $A$ by $|A|$ and the power set of a set $A$ by $\mathcal{P}(A)$. We also denote, by $X$, the finite ground set of non-negative integers that is used for set-labeling the elements of $G$.

Using the terminology and concepts of sumset theory, a particular type of set-labeling, called integer additive set-labeling, was introduced as follows.

Let $\mathbb{N}_0$ be the set of all non-negative integers. An {\em integer additive set-labeling} (IASL, in short) is an injective function $f:V(G)\to \mathcal{P}(\mathbb{N}_0)$ such that the associated function $f^+:E(G)\to \mathcal{P}(X)$ is defined by $f^+ (uv) = f(u)+ f(v)$ for any two adjacent vertices $u$ and $v$ of $G$. A graph $G$ which admits an IASL is called an IASL graph. 

An {\em integer additive set-labeling} $f$ is an integer additive set-indexer (IASI, in short) if the induced function $f^+:E(G) \to \mathcal{P}(\mathbb{N}_0)$ defined by $f^+ (uv) = f(u)+ f(v)$ is injective.  A graph $G$ which admits an IASI is called an IASI graph (see \cite{GA},\cite{GS0}).

Cardinality of the set-label of an element (a vertex or an edge) of a graph $G$ is called the {\em set-indexing number} of that element. An IASL (or an IASI) is said to be a $k$-uniform IASL (or $k$-uniform IASI) if $|f^+(e)|=k ~ \forall ~ e\in E(G)$. The vertex set $V(G)$ is called {\em $l$-uniformly set-indexed}, if all the vertices of $G$ have the set-indexing number $l$.

Motivated by the studies on topological set-labeling of graphs, we introduce the notion of topological integer additive set-labeling of graphs and study the structural properties and characteristics of the graphs which admit this type of set-labeling.

\section{Topological IASL-Graphs}

Note that no vertex of a given graph $G$ has the empty set $\emptyset$ as its set-labeling with respect to a given integer additive set-labeling. Hence, in this paper, we consider only non-empty subsets of the ground set $X$ for set-labeling the elements of $G$.

Analogous to topological set-labeling of graphs, we introduce the notion of topological integer additive set-labeling of certain graphs as follows.

\begin{definition}{\rm
Let $G$ be a graph and let $X$ be a non-empty set of non-negative integers. An integer additive set-labeling $f:V(G)\to \mathcal{P}(X)-\{\emptyset\}$  is called a {\em topological integer additive set-labeling} (TIASL, in short) of $G$ if  $f(V(G))\cup \{\emptyset\}$ is a topology of $X$. A graph $G$ which admits a topological integer additive set-labeling is called a {\em topological integer additive set-labeled graph} (in short, TIASL-graph).}
\end{definition}

The notion of a topological integer additive set-indexer of a given graph $G$ is introduced as follows.

\begin{definition}{\rm
A topological integer additive set-labeling $f$ is called a {\em topological integer additive set-indexer} (TIASI, in short) if the associated function $f^+:E(G)\to \mathcal{P}(X)$ defined by $f^+(uv)=f(u)+f(v); ~ u,v\in V(G)$, is also injective. A graph $G$ which admits an integer additive set-graceful indexer is called an {\em topological integer additive set-indexed graph} (TIASI-graph, in short).}
\end{definition}

\begin{remark}\label{R-IASGL1.1}{\rm
For a finite set $X$ of non-negative integers, let the given function  $f:V(G)\to \mathcal{P}(X)-\{\emptyset\}$ be an integer additive set-labeling on a graph $G$. Since the set-label of every edge $uv$ is the sumset of the sets $f(u)$ and $f(v)$, it can be observed that $\{0\}$ can not be the set-label of any edge of $G$. More over, since $f$ is a TIASL defined on $G$, $X$ must be the set-label of some vertex, say $u$, of $G$ and hence the set $\{0\}$ will be the set-label of a vertex, say $v$, and the vertices $u$ and $v$ are adjacent in $G$.}
\end{remark}

Let $f$ be a topological integer additive set-indexer of a given graph $G$ with respect to a non-empty finite ground set $X$. Then, $\mathcal{T}=f(V(G))\cup \{\emptyset\}$ is a topology on $X$. Then, the graph $G$ is said to be a {\em $f$-graphical realisation} (or simply \textit{$f$-realisation}) of $\mathcal{T}$. The elements of the sets $f(V)$ are called \textit{$f$-open sets} in $G$.

An interesting question that arises in this context is about the existence of an $f$-graphical realisation for a topology $\mathcal{T}$ of a given non-empty set $X$. Existence of graphical realisations for certain topologies of a given set $X$ is established in the following theorem.

\begin{theorem}
Let $X$ be a non-empty finite set of non-negative integers. A topology $\mathcal{T}$ of $X$, consisting of the set $\{0\}$ is graphically realisable.
\end{theorem}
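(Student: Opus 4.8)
The plan is to exhibit an explicit graph together with a TIASI whose family of non-empty vertex-labels is exactly $\mathcal{T}\setminus\{\emptyset\}$; the star graph, with the distinguished open set $\{0\}$ placed at its centre, will do the job. First I would record that, because $X$ is finite, $\mathcal{T}$ is a finite topology containing $\emptyset$ and $X$; write its non-empty members as $A_1,A_2,\dots,A_n$, and use the hypothesis to single out $A_1=\{0\}$. Since $X$ itself is a non-empty open set distinct from $\{0\}$ whenever $X\neq\{0\}$, we have $n\geq 2$, so the construction below is non-degenerate; the excluded case $X=\{0\}$ gives only the one-point topology and a single-vertex graph.

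Next I would build $G$ as the star $K_{1,n-1}$ on vertices $v_1,v_2,\dots,v_n$, taking $v_1$ as the centre adjacent to each of $v_2,\dots,v_n$ and with no other edges, and define $f(v_i)=A_i$. By construction $f$ is a bijection from $V(G)$ onto $\mathcal{T}\setminus\{\emptyset\}$, hence an injective map into $\mathcal{P}(X)\setminus\{\emptyset\}$, and $f(V(G))\cup\{\emptyset\}=\mathcal{T}$ is a topology. Thus $f$ is an IASL realising $\mathcal{T}$, and $G$ has no isolated vertex.

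It then remains to verify the set-indexer (injectivity) condition on the induced edge map. The edges of $G$ are precisely $v_1v_i$ for $2\le i\le n$, and since $\{0\}$ is the additive identity for sumsets we get
\[
f^{+}(v_1 v_i)=f(v_1)+f(v_i)=\{0\}+A_i=A_i .
\]
As the $A_i$ are pairwise distinct, $f^{+}$ is injective, so $f$ is in fact a TIASI; moreover each edge-label $A_i$ is itself an open set and hence lies in $\mathcal{P}(X)$, so the stated codomain of $f^{+}$ is respected, and no edge receives the label $\{0\}$, consistent with Remark~\ref{R-IASGL1.1}. Therefore $G$ is an $f$-realisation of $\mathcal{T}$, which is what is required.

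The substantive point --- and the only place any real choice is forced --- is the injectivity of $f^{+}$ together with the requirement that the edge-labels remain inside $\mathcal{P}(X)$; for arbitrary labelings, sumsets easily leave $X$ or coincide on different edges. Both difficulties dissolve at once by routing every edge through the vertex labeled $\{0\}$, so that each sumset collapses to $\{0\}+A_i=A_i$. This is precisely where the hypothesis $\{0\}\in\mathcal{T}$ is indispensable, and it is the step I would treat as the crux of the argument.
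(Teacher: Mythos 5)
Your proposal is correct and follows essentially the same route as the paper: a star graph with the vertex labeled $\{0\}$ at the centre and the remaining non-empty open sets of $\mathcal{T}$ on the leaves. You additionally spell out why the labeling works (each edge label collapses to $\{0\}+A_i=A_i\subseteq X$ and these are pairwise distinct, so the labeling is even a set-indexer), details the paper dismisses with ``clearly,'' but the construction itself is identical.
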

\begin{proof}
Let $X$ be a non-empty finite set of non-negative integers and let $0\in X$. Consider a topology $\mathcal{T}$ of $X$ consisting of the set $\{0\}$. We need to construct a graph $G$ such that the vertices of $G$ have the (non-empty) set-labels taken from $\mathcal{T}$ in an injective manner. Let us proceed in this direction as explained below. Take a star graph $K_{1,|\mathcal{T}|-2}$. Label its central vertex by $\{0\}$ and label the other vertices by the remaining $|\mathcal{T}|-2$ non-empty open sets in $\mathcal{T}$. Clearly, this labeling is a TIASL defined on the graph $K_{1,|\mathcal{T}|-2}$ and hence $K_{1,|\mathcal{T}|-2}$ is a graphical realisation of $\mathcal{T}$. 
\end{proof}

It can also be observed that if we join two vertices $u$ and $v$ of the above mentioned TIASL-graph $K_{1,|\mathcal{T}|-2}$ by an edge, subject to the condition that $f(u)+f(v)\subseteq X$, the resultant graph will also be a graphical realisation of $\mathcal{T}$. Hence, there may exist more than one graphical realisations for a given topology of $X$. In view of this fact, we have to address the questions regarding the structural properties of TIASL-graphs. Hence, we proceed to find out the structural properties of TIASL-graphs.

\begin{proposition}\label{P-TIASI1}
If $f:V(G)\to \mathcal{P}(X)-\{\emptyset\}$ is a TIASL of a graph $G$, then $G$ must have at least one pendant vertex.
\end{proposition}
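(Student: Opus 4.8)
The plan is to exhibit one specific vertex that is forced to be pendant, namely the vertex whose label is the whole ground set $X$. Since $\mathcal{T}=f(V(G))\cup\{\emptyset\}$ is a topology on $X$, the set $X$ itself belongs to $\mathcal{T}$; as $X\neq\emptyset$, there is a vertex $u$ with $f(u)=X$ (unique, by injectivity of $f$). I would take this $u$ as the candidate pendant vertex and show that its neighbourhood is severely constrained.

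The heart of the argument is a maximum-element computation controlling the neighbours of $u$. For every edge $uw$ the associated set-label must satisfy $f^+(uw)=f(u)+f(w)\subseteq X$, since $f$ is an integer additive set-labeling and the edge labels lie in $\mathcal{P}(X)$. Writing $M=\max X=\max f(u)$, the largest element of the sumset $f(u)+f(w)$ is $\max f(u)+\max f(w)=M+\max f(w)$, and the containment in $X$ forces $M+\max f(w)\le M$. Since $f(w)$ is a non-empty set of non-negative integers, this yields $\max f(w)=0$, whence $f(w)=\{0\}$. Thus every neighbour of $u$ must carry the label $\{0\}$.

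Finally I would invoke injectivity together with the standing hypothesis that $G$ has no isolated vertices. Injectivity guarantees that at most one vertex of $G$ can have the label $\{0\}$, so $u$ can have at most one neighbour; the absence of isolated vertices then forces $u$ to have exactly one neighbour. Hence $\deg(u)=1$ and $u$ is a pendant vertex, as required. This also recovers Remark \ref{R-IASGL1.1}: the unique vertex $v$ with $f(v)=\{0\}$ is precisely the neighbour of $u$, and $u,v$ are adjacent.

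I expect the only delicate step to be the second one, justifying $f(w)=\{0\}$, because it hinges on reading the edge condition as the genuine constraint $f(u)+f(w)\subseteq X$ rather than merely asserting that $f^+$ is well defined. The extremal (maximum-element) argument makes this clean, and everything afterwards is a short counting and injectivity observation.
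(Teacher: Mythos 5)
Your proof is correct and follows essentially the same route as the paper: identify the vertex labelled $X$, use the maximal-element constraint $\max f(u)+\max f(w)\le \max X$ to force every neighbour to carry the label $\{0\}$, and conclude by injectivity that this vertex has exactly one neighbour. Your write-up is in fact slightly more careful than the paper's, since you explicitly invoke injectivity and the no-isolated-vertices convention to pin down the degree as exactly one.
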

\begin{proof}
Let $f$ be a TIASL defined on a graph $G$. Then, clearly  $X\in f(V)$. That is, for some vertex $v\in V(G),~ f(v)=X$. Then, by Remark \ref{R-IASGL1.1}, $v$ is adjacent to a vertex whose set-label is $\{0\}$. Now we claim that, the vertex $v$ can be adjacent to only one vertex that has the set-label $\{0\}$. This can be proved as follows. 

Let $u$ be a vertex that is adjacent to the vertex $v$ and let $a$ be a non-zero element of $X$. Also, let $l$ be the maximal element of $X$. If possible, let $a\in f(u)$. Then, the element $a+l \in f^+(uv)$ and is greater than $l$, which leads to a contradiction to the fact that $f^+(uv)=f(u)+f(v)\subseteq X$, as $f$ is an IASL of $G$. Therefore, the vertex of $G$ having the set-label $X$ can be adjacent to a unique vertex that has the set-label $\{0\}$. That is, the vertex $v$ with $f(v)=X$ is definitely a pendant vertex of $G$. Any TIASL-graph $G$ has at least one pendant vertex. 
\end{proof}

\noindent Figure \ref{G-TIASL} depicts the TIASL, say $f$, of a graph $G$, with respect to a ground set $X=\{0,1,2,3,4\}$ and a topology  $\mathcal{T}=\{\emptyset,X, \{0\},\{1\},\{2\},\{0,1\},\{0,2\},\{1,2\}, \{0,1,2\}\}$ of $X$, where $f(V(G))=\mathcal{T}-\{\emptyset\}$ is the collection of the set-labels of the vertices in $G$.

\begin{figure}[h!]
\centering
\includegraphics[scale=0.65]{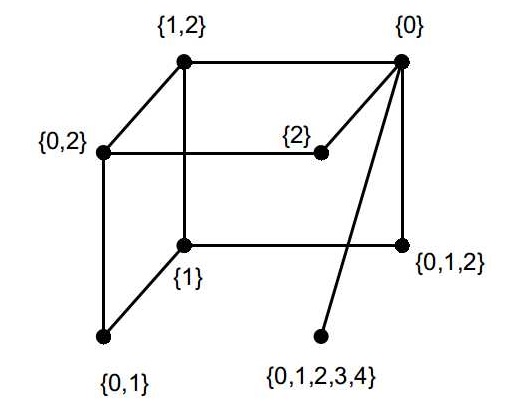}
\caption{An example to a TIASL-graph.}\label{G-TIASL}
\end{figure}

An interesting question in this context is about the number of pendant vertices required in a TIASL-graph. Clearly, the answer to this question depends on the ground set $X$ and the topology $\mathcal{T}$ of $X$ we choose for labeling the vertices of $G$. Our next objective is to determine the minimum number of pendant vertices required in a TIASL-graph.

\begin{proposition}\label{P-TIASI2}
Let $f:V(G)\to \mathcal{P}(X)-\{\emptyset\}$ is a TIASL of a graph $G$. Then, the vertices whose set-labels containing the maximal element of the ground set $X$ are pendant vertices which are adjacent to the vertex having the set-label $\{0\}$.
\end{proposition}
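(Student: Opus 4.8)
The plan is to reuse the maximal-element argument from the proof of Proposition~\ref{P-TIASI1}, but applied now to any vertex whose set-label merely \emph{contains} the maximal element of $X$, not only to the vertex labeled $X$ itself. First I would fix $l$ to be the maximal element of $X$ and let $w$ be an arbitrary vertex with $l\in f(w)$. The goal is twofold: to determine the only admissible set-label of a neighbour of $w$, and then to use injectivity of $f$ together with the standing assumption that $G$ has no isolated vertices to force $\deg(w)=1$.

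The key step is to examine an arbitrary neighbour $u$ of $w$. Since $f$ is an IASL, the edge $uw$ satisfies $f^+(uw)=f(u)+f(w)\subseteq X$. Hence for every $a\in f(u)$ the element $a+l$ lies in $X$, which forces $a+l\le l$ and therefore $a=0$ (recall $X\subseteq\mathbb{N}_0$). As this holds for all elements of the non-empty set $f(u)$, we must have $f(u)=\{0\}$. Thus \emph{every} neighbour of $w$ carries the set-label $\{0\}$.

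Next I would invoke injectivity of $f$: at most one vertex of $G$ can have the set-label $\{0\}$, so $w$ has at most one neighbour. To exclude $\deg(w)=0$, I would appeal to the blanket hypothesis that the graphs under consideration have no isolated vertices, giving $\deg(w)\ge 1$. Combining the two conclusions yields $\deg(w)=1$, so $w$ is a pendant vertex whose unique neighbour is the vertex labeled $\{0\}$, whose existence is guaranteed by Remark~\ref{R-IASGL1.1}. Since a single-element ground set would yield only an isolated vertex, any genuine TIASL-graph has $|X|\ge 2$, so $l\ge 1$; together with $l\in f(w)$ this gives $f(w)\ne\{0\}$, and $w$ is indeed distinct from the $\{0\}$-labeled vertex.

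I expect the only delicate point, rather than a genuine obstacle, to be the bookkeeping: verifying that the $\{0\}$-labeled neighbour really exists and is distinct from $w$, where Remark~\ref{R-IASGL1.1} and the exclusion of isolated vertices carry the weight. The arithmetic heart of the argument---that adding $l$ to any positive element of $X$ overshoots $X$---is exactly the mechanism already used in Proposition~\ref{P-TIASI1}, so this statement is its natural strengthening and the proof should present no real difficulty.
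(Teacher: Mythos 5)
Your proposal is correct and follows essentially the same route as the paper: the arithmetic core (any non-zero element $a$ in a neighbour's label would give $a+l>l$, forcing every neighbour to be labeled $\{0\}$) is exactly the paper's argument. You are somewhat more careful than the paper in making explicit the final steps --- injectivity of $f$ to get at most one neighbour, and the no-isolated-vertices convention to get at least one --- which the paper leaves implicit.
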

\begin{proof}
For  given ground set $X$ of non-negative integers, let $f:V(G)\to \mathcal{P}(X)-\emptyset$ be a TIASL of $G$. Let $l$ be the maximal element of the ground set $X$. Let $v$ be a vertex of $G$ whose set-label contains the element $l\in X$. Let $u$ be an adjacent vertex of $v$ whose set-label contains a non-zero element $b\in X$. Then, $b+l\not \in X$, contradicting the fact that $f$ is an IASL of $G$. If $l\in f(v) ~\text{for} ~ v\in V(G)$, then its adjacent vertices can have a set-label $\{0\}$. That is, all the vertices whose set-labels contain the maximal element of the ground set $X$ must be adjacent to a unique vertex whose set-label is $\{0\}$.
\end{proof}

\noindent Invoking Proposition \ref{P-TIASI1} and Proposition \ref{P-TIASI2}, we have

\begin{proposition}\label{P-TIASI3}
Let $X$ be the ground set and $\mathcal{T}$ be the topology of $X$ which are used for set-labeling the vertices of a TIASL-graph $G$. Then, an element $x_r$ in $X$ can be an element of the set-label $f(v)$ of a vertex $v$ of $G$ if and only if $x_r+x_s\le l$, where $x_s$ is any element of the set-label of another vertex $u$ which is adjacent to $v$ in $G$ and $l$ is the maximal element in $X$.
\end{proposition}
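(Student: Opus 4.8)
The plan is to read this proposition as the common generalisation of Propositions \ref{P-TIASI1} and \ref{P-TIASI2} and to base the whole argument on the single invariant that both of those proofs already exploit: for a TIASL every edge-label is a sumset that must remain inside the ground set, so $f^+(uv) = f(u) + f(v) \subseteq X$ for each edge $uv$ of $G$. Since $l$ is the maximal element of $X$, membership of an integer in $X$ forces it to be at most $l$; this is exactly what converts the set-containment requirement into the numerical inequality $x_r + x_s \le l$ of the statement. I will read $x_s$ as ranging over all elements of the labels of all neighbours $u$ of $v$, and I will understand ``$x_r$ can be an element of $f(v)$'' as the admissibility condition that adjoining $x_r$ to $f(v)$ violates no IASL constraint coming from the top of the ground set.

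First I would prove necessity. Assuming $x_r \in f(v)$, I fix an arbitrary neighbour $u$ of $v$ and an arbitrary element $x_s \in f(u)$; by the definition of the sumset, $x_r + x_s$ is an element of $f(u) + f(v) = f^+(uv)$. The invariant $f^+(uv) \subseteq X$ then places $x_r + x_s$ inside $X$, and maximality of $l$ gives $x_r + x_s \le l$. This is precisely the overflow argument of Proposition \ref{P-TIASI2}, now carried out for an arbitrary element $x_r$ of $f(v)$ rather than only for the maximal element, so no new idea is required here.

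For the converse I would argue that $x_r + x_s \le l$ is exactly the condition keeping the sum $x_r + x_s$ from overrunning the ground set. If this bound holds for every element $x_s$ of every neighbour $u$ of $v$, then adjoining $x_r$ to $f(v)$ inserts into each edge-label $f^+(uv)$ only sums that do not exceed $l$, so the maximal element of $X$ raises no obstruction and $x_r$ is admissible in $f(v)$. I would in fact present this in its contrapositive form, which is the sharper statement: should $x_r + x_s > l$ for some element $x_s$ of some neighbour $u$, then $x_r \in f(v)$ would force $x_r + x_s$ into $f^+(uv)$ while $x_r + x_s \notin X$, contradicting $f^+(uv) \subseteq X$; hence such an $x_r$ can never lie in $f(v)$.

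The step that I expect to require the most care is not arithmetical but a matter of quantifiers and of the meaning of ``admissible,'' since the genuine computation is already packaged in Propositions \ref{P-TIASI1} and \ref{P-TIASI2}. I would state explicitly that the inequality must hold simultaneously for \emph{all} neighbours of $v$ and all their label elements, and I would make clear that among the elements of $X$ it is the maximal element $l$ that supplies the binding upper constraint, so that $x_r + x_s \le l$ captures exactly the admissibility of $x_r$ in $f(v)$. Keeping this reading fixed is what makes the biconditional both meaningful and true.
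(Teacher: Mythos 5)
Your necessity argument is exactly the one the paper intends: the paper offers no written proof for Proposition \ref{P-TIASI3} beyond the phrase ``invoking Proposition \ref{P-TIASI1} and Proposition \ref{P-TIASI2},'' and the invariant you isolate --- $f^+(uv)=f(u)+f(v)\subseteq X$ for every edge, hence every sum $x_r+x_s$ is bounded by the maximal element $l$ --- is precisely the overflow argument those two propositions run for the special cases $f(v)=X$ and $l\in f(v)$. So for the forward direction you are on the paper's track and have actually written out more than the paper does.

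Two points in your converse deserve attention. First, the ``contrapositive form'' you give is not the contrapositive of the converse: the converse is ``$(\forall x_s:\ x_r+x_s\le l)\Rightarrow x_r$ admissible,'' whose contrapositive is ``$x_r$ not admissible $\Rightarrow\exists x_s$ with $x_r+x_s>l$,'' whereas what you prove is ``$\exists x_s$ with $x_r+x_s>l\Rightarrow x_r$ not admissible,'' which is logically the forward direction again. The converse therefore rests only on the informal sentence that ``the maximal element raises no obstruction,'' and that sentence hides the real gap: $x_r+x_s\le l$ guarantees $x_r+x_s\in X$ only when $X$ is an integer interval $\{0,1,\dots,l\}$ (or at least contains all such sums); for a ground set like $X=\{0,1,5\}$ the bound holds for $x_r=x_s=1$ yet $2\notin X$, so $x_r$ is not admissible. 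This weakness is inherited from the proposition as stated --- all of the paper's constructions do take $X=\{0,1,\dots,m\}$ --- but if you want the biconditional to be literally true you should either add that hypothesis on $X$ or replace ``$x_r+x_s\le l$'' by ``$x_r+x_s\in X$'' in your reading of admissibility.
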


\noindent The following result is an immediate consequence of the above propositions.

\begin{proposition}\label{P-TIASI2a}
If $G$ has only one pendant vertex and if $G$ admits a TIASL, then $X$ is the only set-label of the vertices of $G$ containing the maximal element of $X$.
\end{proposition}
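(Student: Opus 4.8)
The plan is to argue by contradiction, leaning entirely on Proposition~\ref{P-TIASI2}, which already does the heavy lifting: it guarantees that \emph{every} vertex whose set-label contains the maximal element $l$ of $X$ is a pendant vertex (adjacent to the unique vertex carrying the set-label $\{0\}$). So the task reduces to counting how many vertices can have labels containing $l$, and then comparing that count against the single-pendant-vertex hypothesis.

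First I would record the forced presence of the full ground set. Since $f$ is a TIASL, the collection $f(V(G))\cup\{\emptyset\}$ is a topology on $X$, so $X$ itself must appear as a vertex set-label; call that vertex $v_0$, with $f(v_0)=X$. Because $l\in X$, the label $f(v_0)$ contains $l$, and hence Proposition~\ref{P-TIASI2} makes $v_0$ a pendant vertex of $G$. Next I would suppose, toward a contradiction, that some vertex $u$ has a set-label $f(u)$ containing $l$ with $f(u)\neq X$. Applying Proposition~\ref{P-TIASI2} to $u$ shows $u$ is also a pendant vertex, and since $f$ is injective the inequality $f(u)\neq f(v_0)=X$ forces $u\neq v_0$. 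Thus $u$ and $v_0$ are two distinct pendant vertices of $G$, contradicting the assumption that $G$ has exactly one pendant vertex. Consequently no set-label other than $X$ can contain $l$, which is precisely the claim.

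The step most in need of care — and the closest thing to an obstacle in an otherwise immediate argument — is the bookkeeping that makes the contradiction genuine: I must be sure the two pendant vertices are distinct from one another and that neither of them is the $\{0\}$-labelled vertex. The first follows from injectivity of $f$ as above, and the second follows because $\{0\}$ does not contain $l$ whenever $l>0$, i.e. whenever $X$ has more than one element. I would therefore note in passing that the degenerate case $X=\{0\}$ (where $l=0$) is trivial and may be excluded, so that in all nontrivial situations $v_0$ is the unique vertex whose label contains the maximal element of $X$.
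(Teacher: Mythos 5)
Your argument is correct and matches the paper's intent exactly: the paper offers no written proof, presenting the proposition as ``an immediate consequence of the above propositions,'' and the deduction you spell out — $X\in f(V(G))$ forces a pendant vertex labelled $X$, while any other label containing the maximal element would force a second, distinct pendant vertex by Proposition~\ref{P-TIASI2} and injectivity of $f$ — is precisely that consequence made explicit. Your added care about distinctness and the degenerate case $X=\{0\}$ is a reasonable tightening rather than a departure.
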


What is the minimum number of pendant vertices required for a graph which admits a TIASL with respect to a given topology $\mathcal{T}$ of the ground set $X$? The following proposition provides a solution to this question. 

\begin{proposition} 
Let $\mathcal{T}$ be a given topology of the ground set $X$. Then,
\begin{enumerate}\itemsep0mm
\item[(i)] the minimum number of pendant edges incident on a particular vertex of a TIASL-graph is equal to the number of $f$-open sets in $f(V(G))$ containing the maximal element of the ground set $X$
\item[(ii)] the minimum number of pendant vertices of a TIASL-graph $G$ is the number of $f$-open sets in $\mathcal{T}$, each of which is the non-trivial summand of at most one $f$-open set in $\mathcal{T}$.
\end{enumerate}
\end{proposition}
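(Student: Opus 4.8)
The plan is to handle the two parts in the same spirit: in each case I would read off a lower bound from the forced structure already established in Propositions \ref{P-TIASI1}--\ref{P-TIASI2a}, and then match it by augmenting the star realisation employed above to prove graphical realisability.

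For part (i) I would take the distinguished vertex to be the vertex $v_0$ with $f(v_0)=\{0\}$, which exists by Remark \ref{R-IASGL1.1}. Proposition \ref{P-TIASI2} shows that every $f$-open set containing the maximal element $l$ of $X$ labels a pendant vertex adjacent to $v_0$; since $f$ is injective, these pendant vertices and the corresponding edges at $v_0$ are pairwise distinct, so $v_0$ carries at least as many pendant edges as there are $f$-open sets containing $l$. For the reverse inequality I would start from the star realisation (central vertex $\{0\}$, leaves the remaining open sets) and add edges among the open sets that do \emph{not} contain $l$ so that each acquires a second neighbour. By Proposition \ref{P-TIASI3} an edge $uv$ is admissible precisely when the maximal element of $f(u)$ plus the maximal element of $f(v)$ does not exceed $l$; choosing the new edges to respect this constraint makes each such vertex non-pendant, so that its edge to $v_0$ is no longer a pendant edge and only the forced pendant edges remain at $v_0$.

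For part (ii) the crux is a criterion for when a vertex $v$ with $f(v)=A$ can be kept off the list of pendant vertices. Such a vertex is non-pendant precisely when it bears two edges, and an edge joining $A$ to a neighbour labelled $B$ carries the label $A+B$; giving $v$ degree two therefore amounts to exhibiting $A$ as a non-trivial summand in two admissible sumsets $A+B_1$ and $A+B_2$ whose values are $f$-open sets, with $B_1,B_2$ distinct $f$-open sets different from $A$. Hence $v$ can be saved from pendancy iff $A$ is a non-trivial summand of at least two $f$-open sets, so the open sets that are non-trivial summands of at most one $f$-open set are exactly the ones forced to remain pendant; this yields the lower bound. The matching construction is obtained by adding, for every non-forced open set, the two summand-edges witnessing its two sumsets, and then checking that all these additions can be installed in a single graph.

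I expect the achievability (upper-bound) half to be the main obstacle, together with the bookkeeping it forces. First, the element $\{0\}$ must be treated separately throughout, since it is a trivial summand of every set and is, by Remark \ref{R-IASGL1.1}, pinned to the forced pendant vertex labelled $X$; it cannot be counted by the summand criterion and must be excluded by hand. Second, one must verify that the edges chosen to raise the degrees of the various non-forced open sets can be realised in one graph without repeating a vertex label, without creating a loop when a summand coincides with $A$ itself, and without ever violating $f(u)+f(v)\subseteq X$. Establishing that a single graph realises \emph{all} the non-forced open sets as non-pendant vertices at once — rather than saving each in a separate realisation — is the genuine difficulty, and I would address it by organising the non-forced open sets into chains dictated by their sumset relations, anchored at $v_0$, so that every one of them attains degree two simultaneously.
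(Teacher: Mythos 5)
Your lower-bound arguments coincide with the paper's proof: part (i) is exactly the appeal to Proposition \ref{P-TIASI2} forcing every $f$-open set containing the maximal element $l$ to label a pendant neighbour of the vertex labelled $\{0\}$, and part (ii) is the same trichotomy on whether an open set is a non-trivial summand of zero, exactly one, or more than one $f$-open set. Where you diverge is that the paper stops there: its proof consists solely of these forced-pendancy observations and tacitly asserts that the bounds are attained, whereas you correctly single out the achievability half --- realising all non-forced open sets as non-pendant vertices simultaneously in one graph --- as the genuine work and sketch a construction for it. That caution is warranted, and in fact your proposed step of giving every open set not containing $l$ a second neighbour cannot always be carried out: for the chain topology $\{\emptyset,\{0\},\{0,2\},\{0,2,3\},X\}$ on $X=\{0,1,2,3\}$, the vertex labelled $\{0,2\}$ admits no neighbour other than the one labelled $\{0\}$, so the vertex labelled $\{0\}$ necessarily carries three pendant edges while only two open sets contain $l=3$. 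This is a defect of the statement itself (as is the special status of $\{0\}$ in part (ii), which you also flag) rather than a gap peculiar to your argument, since the paper's own proof never confronts the attainability direction at all; but it does mean the single-graph construction you identify as the main obstacle cannot be completed in the generality the proposition claims.
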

\begin{proof}
Let $G$ be a graph which admits a TIASL $f$ with respect to a topology $\mathcal{T}$ of the ground set $X$. 

\noindent{\em Case (i):} If an $f$-open set $X_i$ contains the maximal element of $X$, then by Proposition \ref{P-TIASI2}, $X_i$ can be the set-label of a pendant vertex, say $v_i$, which is adjacent to the vertex having set-label $\{0\}$. Hence, every $f$-open set containing the maximal element of $X$ must be the set-label of a pendant vertex that is adjacent to a single vertex whose set-label is $\{0\}$. Therefore, the minimum number of pendant edges incident on a single vertex is the number of $f$-open sets in $\mathcal{T}$ containing the maximal element of $X$.

\noindent {\em Case (ii):} If an $f$-open set $X_i$ is not a non-trivial summand of any $f$-open sets in $\mathcal{T}$, then the vertex with set-label $X_i$ can be adjacent only to the vertex with set-label $\{0\}$. If $X_i$ is the non-trivial summand of exactly one $f$-open set in $\mathcal{T}$, then the vertex $v_i$ with the set-label $X_i$ can be adjacent only to one vertex say $v_j$ with set-label $X_j$, where $X_i+X_j\subseteq X$. If $X_i$ is the non-trivial summand of more than one $f$-open sets in $\mathcal{T}$, then the vertex with set-label $X_i$ can be adjacent to more than one vertex of $G$ and hence $v_i$ need not be a pendant vertex. Therefore, the minimum number of pendant vertices in $G$ is the number of $f$-open sets in $\mathcal{T}$, each of which is the non-trivial summand of at most one $f$-open set in $\mathcal{T}$.
\end{proof}

Does every graph with one pendant vertex admit a TIASL? The answer to this question depends upon the choice of the ground set $X$. Hence, let us verify the existence of TIASL for certain standard graphs having pendant vertices by choosing a ground set $X$ suitably. For this, first consider the following graphs.

Let $G$ be a graph on $n$ vertices and let $P_m$ be a path that has no common vertex with $G$. We call the graph obtained by identifying one vertex of $G$ and one end vertex of $P_m$ an {\em $(n,m)$-ladle}. 

If $G$ is a cycle $C_n$, then this ladle graph is called an {\em (n,m)-tadpole graph} or a {\em dragon graph}. If $m=1$ in a tadpole graph, then $G$ is called an {\em $n$-pan}. 

If $G$ is a complete graph on $n$ vertices, then the corresponding $(n,m)$-ladle graph is called an {\em $(n,m)$-shovel}.

Now, we proceed to discuss the admissibility of TIASL by these types of graphs. The following result establishes the admissibility of TIASL by a pan graph.

\begin{proposition}\label{P-TIASL-p}
A pan graph admits a topological integer additive set-labeling.
\end{proposition}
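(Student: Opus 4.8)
The plan is to exhibit an explicit ground set $X$, an explicit topology $\mathcal{T}$ of $X$, and an injective assignment of the non-empty members of $\mathcal{T}$ to the vertices that satisfies the IASL condition on every edge. Write the pan graph $G$ as a cycle $C_n=v_1v_2\cdots v_nv_1$ together with a single pendant vertex $w$ adjacent to $v_1$, so that $G$ has $n+1$ vertices and exactly one pendant vertex. Because of this, Proposition \ref{P-TIASI2} already pins down the top of the intended topology: any vertex whose label contains the maximal element $l$ of $X$ is a pendant vertex adjacent to the vertex labelled $\{0\}$, so the set $X$ (which contains $l$) must label the pendant vertex $w$, while its neighbour $v_1$ is forced to carry $\{0\}$. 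What remains is to label $v_2,\dots,v_n$ so that $f(V(G))\cup\{\emptyset\}$ is a topology and every cycle edge $v_iv_{i+1}$ satisfies $f(v_i)+f(v_{i+1})\subseteq X$.

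For the ground set I would take $X=\{0,1,2,\dots,n\}$, whose maximal element is $l=n$, and for the topology I would use the nested chain
\[
\mathcal{T}=\{\emptyset\}\cup\{\,I_k : 0\le k\le n\,\},\qquad I_k=\{0,1,\dots,k\}.
\]
Since the intervals $I_k$ are totally ordered by inclusion, $\mathcal{T}$ is closed under arbitrary unions and finite intersections and contains both $\emptyset$ and $I_n=X$, so it is indeed a topology of $X$. It has exactly $n+1$ non-empty open sets, matching the number of vertices of $G$; thus the labeling uses each open set exactly once, and $f$ is injective by construction.

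The assignment I propose is $f(w)=I_n=X$ and $f(v_1)=I_0=\{0\}$, placing the remaining intervals $I_1,\dots,I_{n-1}$ around the cycle in the zig-zag order $I_0,I_{n-1},I_1,I_{n-2},I_2,I_{n-3},\dots$, alternating a low-indexed interval with a high-indexed one. The single fact that drives the verification is that the sumset of two of these intervals is again an interval, $I_a+I_b=I_{a+b}$, so the requirement $f(v_i)+f(v_{i+1})\subseteq X$ reduces to the arithmetic inequality $a+b\le n$ on the maximal elements of the two adjacent labels. The pendant edge and the two cycle edges at $v_1$ are immediate, since $\{0\}+X=X$ and $\{0\}+B=B\subseteq X$. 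For the remaining cycle edges the zig-zag pairs each low index with a high index so that consecutive maximal elements always sum to $n-1$ or $n$, and the wrap-around edge returns to $I_0$ with a sum at most $n-1$; every such sum is at most $l=n$, so the IASL condition holds throughout.

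The step I expect to be the genuine obstacle is exactly this cyclic bookkeeping: forcing the sumset condition to hold on \emph{all} cycle edges at once, under the rigidity imposed by Proposition \ref{P-TIASI2}, which pins the unique open set containing $l$ onto the pendant and thereby forbids placing any large set freely on the cycle. Choosing the nested chain is what dissolves this difficulty, since it turns the set-theoretic sumset condition into the single inequality $a+b\le n$, and the zig-zag is the explicit ordering that keeps every adjacent sum equal to $n-1$ or $n$. Checking that this ordering is a genuine permutation of $\{0,1,\dots,n-1\}$ and that the wrap-around sum stays within $X$ for every $n\ge 3$ completes the argument and shows $f$ to be a TIASL of the pan graph.
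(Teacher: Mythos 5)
Your construction is correct, and it shares the paper's central idea: take the topology to be a nested chain of initial segments $I_k=\{0,1,\dots,k\}$ together with $\emptyset$, put the ground set $X$ on the pendant vertex and $\{0\}$ on its neighbour, and exploit the identity $I_a+I_b=I_{a+b}$ so that the sumset condition on each edge becomes the inequality $a+b\le l$. Where you genuinely diverge is in the arrangement of the intervals around the cycle and, consequently, in the size of the ground set. The paper places the intervals in monotone order $I_0,I_1,\dots,I_{n-1}$ along consecutive cycle vertices, so the worst edge is $I_{n-2}+I_{n-1}=I_{2n-3}$, and it then simply enlarges the ground set to $X=\{0,1,\dots,m\}$ with $m\ge 2n-3$ to absorb this; the verification is a one-line check of a single extremal edge. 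Your zig-zag ordering $I_0,I_{n-1},I_1,I_{n-2},\dots$ keeps every adjacent sum equal to $n-1$ or $n$ and thereby gets away with $X=\{0,1,\dots,n\}$, a ground set of $n+1$ elements rather than $2n-2$. That extra combinatorial care buys a substantially smaller ground set, which is directly relevant to the topological set-indexing number the paper introduces later, at the cost of having to verify that the zig-zag is a permutation of $\{0,\dots,n-1\}$ and that the wrap-around edge behaves (both of which check out: interior sums alternate between $n-1$ and $n$, and the edge returning to $I_0$ has sum at most $n-1$). One small remark: your appeal to Proposition \ref{P-TIASI2} to "force" $X$ onto the pendant vertex is dispensable for an existence proof — you are free to choose the labeling — but it does correctly explain why no other placement of $X$ could work.
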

\begin{proof}
Let $G$ be an $m$-pan graph. Let $v$ be the pendant vertex and $v_1,v_2, \ldots, v_n$ be the vertices of $C_n$. Without loss of generality, let $v_1$ be the unique vertex adjacent to $v$ in $G$. Label the vertices of the cycle $C_n$ of $G$ in such a way that we have $f(v_1)=\{0\}, f(v_i)=\{0,1,\ldots, i-1\}: 2\le i\le n$. Now, let $X=\{0,1,2,3,\ldots, m\}$, where $m\ge 2n-3$ and label the pendant vertex $v$ by the set $X$. Hence, the collection of the set-labels of the vertices of $G$ is $\mathcal{A}=\{\{0\}, \{0,1\}, \{0,1,2\}, \ldots, \{0,1,2,\ldots,n-1\}, X\}$. Clearly, the set $\mathcal{T}=\mathcal{A}\cup \{\emptyset\}$ is a topology on $X$. Therefore, this labeling of $G$ is a TIASL of $G$. Hence, the $n$-pan $G$ admits a TIASL. 
\end{proof}

\noindent Figure \ref{fig:G-TIASL4a} illustrates the admissibility of TIASL by an $n$-pan with respect to the ground set $X=\{0,1,2,3,\ldots, 2n-3\}$.

\begin{figure}[h!]
\centering
\includegraphics[width=0.75\linewidth]{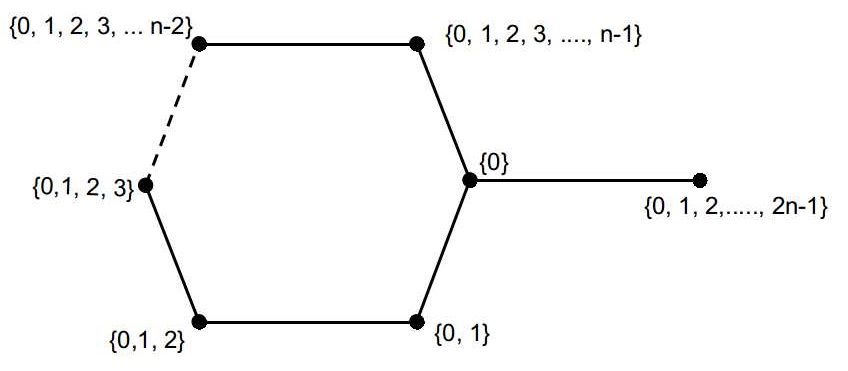}
\caption{An $n$-pan graph with a TIASL defined on it.}
\label{fig:G-TIASL4a}
\end{figure}

We now proceed to verify the admissibility of TIASL by the general tadpole graphs.

\begin{proposition}\label{P-TIASL-t}
A tadpole graph admits a topological integer additive set-labeling.
\end{proposition}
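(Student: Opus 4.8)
The plan is to generalise the pan construction of Proposition \ref{P-TIASL-p} by stretching the nested (chain) topology used there along the entire spine of the tadpole. Write the $(n,m)$-tadpole as the cycle $C_n$ on vertices $v_1,v_2,\ldots,v_n$ with a tail $u_1,u_2,\ldots,u_k$ attached at $v_1$, where $u_1$ is adjacent to $v_1$ and $u_k$ is the unique pendant vertex; put $N=n+k$ for the total number of vertices. The case $k=1$ is precisely the pan already treated, so I take that as the base case and describe the construction uniformly. By Proposition \ref{P-TIASI1} the label $X$ is forced onto the pendant vertex and its neighbour must receive $\{0\}$, so I would begin by setting $f(u_k)=X$ and assigning $\{0\}$ to the unique neighbour of $u_k$.

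The next step is to enumerate the remaining $N-1$ vertices along the spine, starting from the pendant's neighbour and walking back along the tail and then once around the cycle; call this ordering $s_1,s_2,\ldots,s_{N-1}$, so that $s_1$ is the pendant's neighbour and $s_{N-1}=v_n$. I would then assign the nested initial segments $f(s_j)=\{0,1,\ldots,j-1\}$ for $1\le j\le N-1$. Injectivity is immediate because these sets have pairwise distinct cardinalities and $X$ is strictly larger than each of them, so $f$ is a genuine set-labeling. Moreover $f(V(G))\cup\{\emptyset\}=\{\emptyset,\{0\},\{0,1\},\ldots,\{0,1,\ldots,N-2\},X\}$ is totally ordered by inclusion; being a chain, it is closed under arbitrary unions and finite intersections and contains both $\emptyset$ and $X$, hence it is a topology on $X$. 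This disposes of the topological requirement with no further effort.

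What remains, and where the only real computation lies, is the integer additive condition $f^+(uv)=f(u)+f(v)\subseteq X$ on every edge. The key simplification is that every vertex label is an interval $\{0,1,\ldots,t\}$, so every edge sumset is again an interval $\{0,1,\ldots,a+b\}$, and the condition collapses to the single inequality $a+b\le l$ once we fix $X=\{0,1,\ldots,l\}$. The pendant edge gives $X+\{0\}=X\subseteq X$; a consecutive spine edge $s_js_{j+1}$ gives largest element $(j-1)+j=2j-1$, maximised at $j=N-2$; and the cycle-closing edge $v_nv_1$ gives largest element $(N-2)+(k-1)$. I would then choose the ground set $X=\{0,1,\ldots,l\}$ with $l\ge 2N-5=2(n+k)-5$, which simultaneously keeps every edge sumset inside $X$ and keeps the maximal element $l$ out of every label except that of the pendant, in agreement with Proposition \ref{P-TIASI2}.

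The main obstacle, such as it is, is the bookkeeping that identifies the worst edge: I must confirm that the cycle-closing edge $v_nv_1$ is not more demanding than the consecutive spine edges. Since $v_1$ occupies position $k$ in the chain while $v_{n-1}$ occupies the larger position $N-2$, the difference $2N-5-\big((N-2)+(k-1)\big)=n-2$ is nonnegative for every $n\ge 3$, so the sumset on $v_{n-1}v_n$ dominates that on $v_nv_1$ and the single bound $l\ge 2N-5$ governs all cases. With this choice of $X$ the labelling displayed above is a TIASL of the tadpole graph, which completes the argument.
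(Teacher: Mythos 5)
Your proposal is correct and follows essentially the same route as the paper: nested initial segments $\{0,1,\ldots,j-1\}$ along the spine forming a chain topology, the label $X$ on the pendant vertex and $\{0\}$ on its neighbour, and the ground set $X=\{0,1,\ldots,l\}$ with $l\ge 2(n+m)-5$. Your explicit check that the cycle-closing edge $v_nv_1$ is dominated by the edge $v_{n-1}v_n$ (the difference being $n-2\ge 0$) is a detail the paper leaves implicit, but the construction and the bound are identical.
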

\begin{proof}
Let $G$ be an $(n,m)$-tadpole graph. Let $\{v_1,v_2, v_3, \ldots, v_n\}$ be the vertex set of $C_n$ and let $\{u_0,u_1,u_2,u_3, \ldots, u_m\}$ be the vertex set of $P_m$. Without loss of generality, let $u_0$ be the pendant vertex of $P_m$ in $G$. Identify the vertex $u_m$ of $P_m$ and the vertex $v_1$ of the cycle to form a tadpole graph. Let us define an IASL $f$ on $G$ as follows. Label the vertex $u_1$ by the set $\{0\}$, the vertex $u_2$ by the set $\{0,1\}$ and in general, the vertex $u_i$ by the set $\{0,1,2,\ldots, i-1\}$, for $1\le i\le m$. Therefore, the set-label of the vertex $u_m=v_1$ is $\{0,1,2,\ldots, m-1\}$. Now, label the remaining vertices of $C_n$ in $G$ as follows. Label the vertex $v_2$ by the set $\{0,1,2,\ldots, m\}$ and in general, label the vertex $v_j$ by the set $\{0,1,2,\ldots, m+j-2\}$.  Now, choose the set $X=\{0,1,2,\ldots,l\}$, where $l\ge 2(m+n)-5$. Now, the only vertex of $G$ that remains to be labeled is the pendant vertex. Label the  vertex $u_0$ by the set $X$. Then, the collection of set-labels of $G$ is $\mathcal{A}=\{\{0\}, \{0,1\}, \{0,1,2\}, \ldots, \{0,1,2,\ldots,m+n-2\},X\}$. Clearly, the set $\mathcal{T}=\mathcal{A}\cup \{\emptyset\}$ is a topology on $X$. Hence, this labeling is a TIASL defined on $G$. 
\end{proof}

\noindent Figure \ref{fig:G-TIASL4b} illustrates the admissibility of TIASL by the $(m,n)$-tadpole graph with respect to the ground set $X=\{0,1,2,\ldots,2(m+n)-5\}$.

\begin{figure}[h!]
\centering
\includegraphics[width=0.9\linewidth]{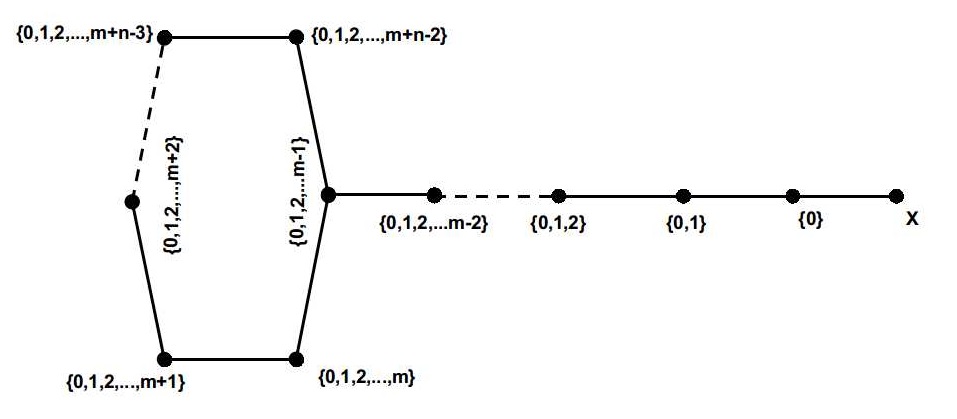}
\caption{An $(n,m)$-tadpole graph with a TIASL defined on it.}
\label{fig:G-TIASL4b}
\end{figure}

We can extend the above results to the shovel graphs also. The following result establishes the admissibility of TIASL by shovel graphs by properly choosing the ground set $X$.

\begin{proposition}\label{P-TIASL-s}
The $(n,m)$-shovel graph admits a topological integer additive set-labeling. 
\end{proposition}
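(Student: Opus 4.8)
The plan is to adapt the chain-labeling construction used for the tadpole graph in Proposition~\ref{P-TIASL-t}, since a shovel differs from a tadpole only in that its attached block is a complete graph $K_n$ rather than a cycle $C_n$. First I would fix notation: let $\{v_1,v_2,\ldots,v_n\}$ be the vertices of $K_n$ and $\{u_0,u_1,\ldots,u_m\}$ the vertices of the path $P_m$, where $u_0$ is the pendant vertex and $u_m$ is identified with $v_1$. I would then assign the same nested (chain) labels as before: set $f(u_i)=\{0,1,\ldots,i-1\}$ for $1\le i\le m$, so that $f(v_1)=f(u_m)=\{0,1,\ldots,m-1\}$, and set $f(v_j)=\{0,1,\ldots,m+j-2\}$ for $2\le j\le n$. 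Finally I would take $X=\{0,1,\ldots,l\}$ with $l\ge 2(m+n)-5$ and label the pendant vertex $u_0$ by $X$.

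With this assignment the collection $\mathcal{A}=f(V(G))$ is the totally ordered family $\{\{0\},\{0,1\},\ldots,\{0,1,\ldots,m+n-2\},X\}$. Since any two members of a chain satisfy that their union is the larger and their intersection the smaller, $\mathcal{A}$ is closed under finite unions and intersections; adjoining $\emptyset$ and noting $X\in\mathcal{A}$ then shows that $\mathcal{T}=\mathcal{A}\cup\{\emptyset\}$ is a topology on $X$, exactly as in the tadpole case. So the only genuinely new thing to verify is that $f$ remains an integer additive set-labeling, that is, that $f(u)+f(v)\subseteq X$ for \emph{every} edge $uv$ of $G$.

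The main point---and the one place where the completeness of $K_n$ matters---is the verification of the sumset condition across all $\binom{n}{2}$ edges of the block, not merely the consecutive ones present in a cycle. Here I would use that each label is an initial segment $\{0,1,\ldots,t\}$, so that for vertices $v_i,v_j$ the sumset $f(v_i)+f(v_j)$ is again an initial segment whose largest element is the sum of the two largest elements, namely $(m+i-2)+(m+j-2)$. Maximising over all pairs, this is largest for $\{i,j\}=\{n-1,n\}$, giving $2(m+n)-5$, while the path edges $u_iu_{i+1}$ and the pendant edge $u_0u_1$ contribute strictly smaller maxima. Consequently every edge sumset is contained in $\{0,1,\ldots,2(m+n)-5\}\subseteq X$, and hence $f$ is an IASL.

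I expect the only subtlety to be confirming that the worst-case sumset in the complete graph is still produced by the two largest labels $f(v_{n-1})$ and $f(v_n)$---the very pair already adjacent in the tadpole's cycle---so that the extra edges of $K_n$ impose no new requirement and the ground-set bound $l\ge 2(m+n)-5$ is unchanged. Once this observation is made (and it holds already for $n\ge 2$), the construction is simultaneously an IASL and a topology-valued labeling, so the shovel graph admits a topological integer additive set-labeling.
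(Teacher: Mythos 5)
Your construction is essentially identical to the paper's: the same nested chain of initial segments $\{0,1,\ldots,t\}$ on the path and the clique, the same ground set determined by the worst-case sum $2(m+n)-5$, and the pendant vertex labeled by $X$. If anything, you are more careful than the paper, since you explicitly check the sumset condition over all $\binom{n}{2}$ edges of $K_n$ rather than only the edge $v_{n-1}v_n$; the argument is correct.
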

\begin{proof}
Let $G$ be an $(n,m)$-shovel graph. Let $\{v_0,v_1,v_2,v_3, \ldots, v_m\}$ be the vertex set of $P_m$ and $\{v_m,v_{m+1}, v_{m+2}, \ldots, v_{m+n-1}\}$ be the vertex set of $K_n$ in the given shovel graph $G$, where $v_0$ is the pendant vertex of $P_m$ (and hence of $G$). Define an IASL $f$ on $G$ which assigns set-labels to the vertices of $G$ injectively in such a way that any vertex $v_i$ has the set-label $\{0,1,2,\ldots, i-1\}$, for $1\le i \le m+n-1$. Note that, the pendant vertex $v_0$ remains unlabeled at the moment. It can be noted that the maximal element of the set-label $f^+(v_{m+n-2}v_{m+n-1})$ is $2(m+n)-5$. Hence, choose the set $X=\{0,1,2,3,\ldots, 2(m+n)-5\}$ and label the pendant vertex $v_0$ by the set $X$ itself. Therefore, $f(V(G))=\{\{0\}, \{0,1\}, \{0,1,2\}, \ldots, \{0,1,2,\ldots,m+n-2\},X\}$ and $f(V(G))\cup \{\emptyset\}$ is a topology on $X$. Hence, $f$ is a TIASL on $G$.
\end{proof}

\noindent Figure \ref{fig:G-TIASL5} depicts the admissibility of TIASL by an $(n,m)$-shovel graph with ground set $X=\{0,1,2,3,\ldots, 2(m+n)-5\}$.

\begin{figure}[h!]
\centering
\includegraphics[width=0.85\linewidth]{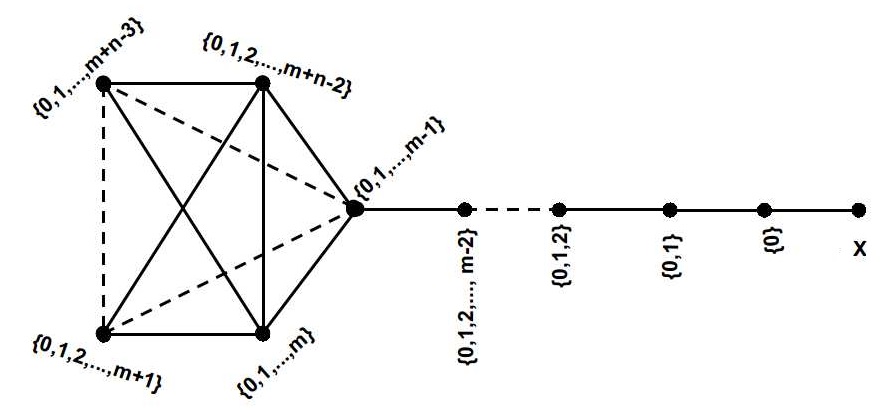}
\caption{An $(n,m)$-shovel graph with a TIASL defined on it.}
\label{fig:G-TIASL5}
\end{figure}

The above propositions raise the question whether the existence of a pendant vertex in a given graph $G$ results in the admissibility of TIASL by it. The choice of $X$ in all the above results played a major role in establishing a TIASL for $G$.  The following is a necessary and sufficient condition for a given graph with at least one pendant vertex to admit a TIASL.

\begin{theorem}\label{T-TIASI1}
A graph $G$ admits a TIASL if and only if $G$ has at least one pendant vertices. 
\end{theorem}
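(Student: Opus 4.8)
The plan is to establish both directions of this biconditional, where the forward direction is already essentially done and the reverse direction carries all the real work.

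\textbf{The forward direction.} If $G$ admits a TIASL $f$, then Proposition \ref{P-TIASI1} immediately guarantees that $G$ has at least one pendant vertex, since the vertex labeled $X$ must be pendant and adjacent to the vertex labeled $\{0\}$. So this direction is an instant citation of the earlier proposition and requires no new argument.

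\textbf{The reverse direction.} The substance of the theorem is the converse: given an arbitrary graph $G$ with at least one pendant vertex, I must construct a ground set $X$ and a labeling $f:V(G)\to\mathcal{P}(X)-\{\emptyset\}$ that is simultaneously an IASL and has $f(V(G))\cup\{\emptyset\}$ equal to a topology on $X$. The natural strategy, guided by the constructions in Propositions \ref{P-TIASL-p}, \ref{P-TIASL-t} and \ref{P-TIASL-s}, is to build a \emph{chain topology}: fix a pendant vertex $v$ and let $u$ be its unique neighbour. Enumerate the remaining vertices as $v_1,v_2,\ldots,v_{n-1}$ (where $n=|V(G)|$) and assign nested initial-segment labels $f(u)=\{0\}$, then $f(v_i)=\{0,1,\ldots,i\}$ in some order, and finally $f(v)=X$ where $X=\{0,1,2,\ldots,l\}$ for a sufficiently large $l$. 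Because the labels form a chain $\{0\}\subset\{0,1\}\subset\cdots\subset X$, the collection $\{\emptyset\}\cup f(V(G))$ is closed under arbitrary unions and intersections and contains both $\emptyset$ and $X$; hence it is automatically a topology on $X$. This disposes of the topological requirement cleanly.

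\textbf{The main obstacle} is verifying that this chain labeling is genuinely an IASL, i.e. that for \emph{every} edge $xy\in E(G)$ we have $f^+(xy)=f(x)+f(y)\subseteq X$, not merely for the edges in the specific path/cycle/complete subgraphs treated earlier. Since $G$ is now arbitrary, two adjacent vertices may carry any two of the nested labels, and the sumset of $\{0,1,\ldots,a\}$ and $\{0,1,\ldots,b\}$ is $\{0,1,\ldots,a+b\}$, whose maximal element is $a+b$. The worst case is the sumset of the two largest \emph{proper} labels, which are $\{0,\ldots,n-2\}$ and $\{0,\ldots,n-1\}$, giving maximal element $2n-3$; and crucially I must ensure no \emph{other} vertex is adjacent to the pendant vertex $v$ carrying label $X$, for otherwise $l+(\text{positive})>l$ would escape $X$ --- this is exactly the constraint already isolated in Remark \ref{R-IASGL1.1} and Proposition \ref{P-TIASI1}, and it holds precisely because $v$ is pendant. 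Therefore choosing $l\ge 2n-3$ (or more safely $l$ at least the largest possible $a+b$ over adjacent non-pendant pairs) forces every edge sumset into $X$. I would close by remarking that this construction works uniformly for any graph with a pendant vertex, so the existence of a single pendant vertex is both necessary and sufficient.
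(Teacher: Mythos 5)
Your proposal is correct and follows essentially the same route as the paper: the forward direction is an appeal to the earlier pendant-vertex propositions, and the converse is the same chain construction with nested initial segments $\{0\},\{0,1\},\ldots$ on the non-pendant vertices, the label $X=\{0,1,\ldots,l\}$ with $l\ge 2n-3$ on the pendant vertex, and the observation that a chain together with $\emptyset$ is automatically a topology. If anything, your version is slightly more careful than the paper's, since you explicitly place $\{0\}$ on the unique neighbour of the pendant vertex and explicitly verify that every edge sumset lands inside $X$.
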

\begin{proof}
Let $G$ be a graph which admits a TIASL, say $f$. Then, the ground set $X\in f(V(G))$. Hence, by Proposition \ref{P-TIASI2}, the vertex with the set-label $X$ must be a pendant vertex. More over, by Proposition \ref{P-TIASI2}, if the set-label of a vertex $v_i$ contains the maximal element of $X$, then $v_i$ is a pendant vertex. Then, $G$ has at least one pendant vertex.

Conversely, assume that $G$ has at least one pendant vertex. Let $V(G)=\{v_1,v_2,v_3\ldots,v_n\}$. Without loss of generality, let $v_1$ be a pendant vertex of $G$. Now, label the vertex $v_i$ by the set $\{0,1,2,3, \ldots, i-1\}$ for $1\le i\le n$. Then, as explained in the above results, the maximal element in all set-labels of edges of $G$ is $2n-3$. Choose $X=\{0,1,2,\ldots, 2n-3\}$ and label the pendant vertex $v_1$ by the set $X$. Then, $f(V(G))=\{\{0\},\{0,1\},\{0,1,2\},\ldots, \{0,1,2,\ldots,n-1\}, X\}$. Therefore, $f(V(G))\cup \{\emptyset\}$ is a topology on $X$ and hence this labeling is a TIASL of $G$. 
\end{proof}

\noindent Theorem \ref{T-TIASI1} gives rise to the following result.

\begin{theorem}\label{T-TIASI1a}
Let $G$ be a graph with a pendant vertex $v$ which admits a TIASL, say $f$, with respect to a ground set $X$. Let $f_1$ be the restriction of $f$ to the graph $G-v$. Then, there exists a collection $\mathcal{B}$ of proper subsets of $X$ which together with $\{\emptyset\}$ form a topology of the union of all elements of $\mathcal{B}$.
\end{theorem}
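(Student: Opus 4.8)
The plan is to exhibit the required family $\mathcal{B}$ explicitly inside the topology $\mathcal{T}=f(V(G))\cup\{\emptyset\}$. First I would record the structural facts already in hand. Since $f$ is a TIASL, $\mathcal{T}$ is a finite topology on $X$, so $X\in\mathcal{T}$ and, by Remark \ref{R-IASGL1.1}, $\{0\}\in f(V(G))$; moreover $|X|\ge 2$, because the pendant vertex $v$ has a neighbour and the vertex labels are distinct. Hence $\{0\}$ is a \emph{proper} nonempty open set. Because $f$ is injective, the restriction satisfies $f_1(V(G-v))=f(V(G))\setminus\{f(v)\}$, which is just $\mathcal{T}$ with the two sets $\emptyset$ and $f(v)$ removed. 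This is the naive candidate for $\mathcal{B}$ suggested by the statement.

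The step I expect to be the genuine obstacle is that this naive candidate need not be a topology on its own union. Even when $v$ is taken to be the vertex carrying the label $X$ (which is legitimate, since by Proposition \ref{P-TIASI2} the vertex labelled $X$ is pendant), the union of the surviving open sets may recover all of $X$, so that closure under unions breaks down precisely because $X$ has been deleted; for instance with $X=\{0,1,2\}$ and $\mathcal{T}=\{\emptyset,\{0\},\{0,1\},\{0,2\},X\}$ one has $\{0,1\}\cup\{0,2\}=X\notin f_1(V(G-v))$. For this reason I would not claim that $f_1(V(G-v))$ is itself a topology; instead I would pass to a proper open subspace.

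The fix is to choose a set $U\in\mathcal{T}$ that is maximal with respect to inclusion among the open sets different from $X$. Such a $U$ exists because $\mathcal{T}$ is finite and contains the proper open set $\{0\}$. I then set
\[
\mathcal{B}=\{\,W\in\mathcal{T}:\emptyset\ne W\subseteq U\,\}.
\]
Using only that $\mathcal{T}$ is a finite topology, I would verify the axioms on $U$: the sets $U$ and $\emptyset$ lie in $\mathcal{B}\cup\{\emptyset\}$; any union of members of $\mathcal{B}$ is an open set contained in $U$, hence again in $\mathcal{B}$; and any finite intersection of members is open and contained in $U$, hence in $\mathcal{B}$ (or empty). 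Thus $\mathcal{B}\cup\{\emptyset\}$ is a topology whose union is its largest element $U$. Since every member of $\mathcal{B}$ is contained in $U\subsetneq X$, each is a proper subset of $X$, which is exactly what the statement demands.

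Finally I would tie the construction back to $f_1$. Each member of $\mathcal{B}$ is a nonempty open set different from $X$, hence equals $f(w)$ for some vertex $w$ carrying a label other than $X$; in particular, if the given pendant vertex $v$ is the one labelled $X$, then $\mathcal{B}\subseteq f_1(V(G-v))$. Moreover, whenever $\mathcal{T}$ is a chain — as in the pan, tadpole and shovel labelings and in the construction proving Theorem \ref{T-TIASI1} — the maximal proper open set $U$ is the penultimate term of the chain and $\mathcal{B}$ coincides with $f_1(V(G-v))$ exactly, so the naive candidate does work in those cases. (If one only wants the bare existence asserted by the theorem, the singleton family $\mathcal{B}=\{\{0\}\}$ already witnesses it, since $\{\emptyset,\{0\}\}$ is a topology on $\{0\}$; the maximal-open-set construction is preferable because it retains the full topological content of $f$ on $G-v$.)
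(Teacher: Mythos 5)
Your proof is correct for the statement as written, but it takes a genuinely different route from the paper's. The paper does not actually argue from the given labeling $f$: it \emph{re-constructs} a TIASL by first choosing $\mathcal{B}$ to be a family of proper subsets of $X$ containing $\{0\}$ that is stipulated in advance to be closed under unions and under intersections of non-singleton members, labels $v$ by $X$ and the remaining vertices by the members of $\mathcal{B}$, and then sets $\mathcal{B}=f_1(V(G-v))$; the verification that $\mathcal{B}\cup\{\emptyset\}$ is a topology on $B=\bigcup_{B_i\in\mathcal{B}}B_i$ then leans on Proposition \ref{P-TIASI2a} and on the implicit assumption that $v$ is the \emph{unique} pendant vertex (so that no set in $\mathcal{B}$ contains the maximal element of $X$ and $B$ is a proper subset). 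Your argument instead keeps the given $f$ arbitrary, correctly observes that $f_1(V(G-v))$ can fail to be a topology on its own union when $G$ has several pendant vertices (your $K_{1,3}$ example with $\mathcal{T}=\{\emptyset,\{0\},\{0,1\},\{0,2\},X\}$ is a legitimate TIASL and does break closure under unions), and repairs this by passing to the open sets contained in a maximal proper open set $U$. What your approach buys is a genuine deduction from the hypotheses as stated, valid for any TIASL and any number of pendant vertices; what it gives up is that your $\mathcal{B}$ is in general only a proper subfamily of $f_1(V(G-v))$, whereas the paper's intent (realized only for its specially constructed labeling, and only under the one-pendant-vertex reading) is that the whole of $f_1(V(G-v))$ serves as $\mathcal{B}$. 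Your closing remark that $\mathcal{B}=\{\{0\}\}$ already witnesses the bare existence claim is accurate and worth keeping, since it shows the theorem as phrased is much weaker than what either proof actually delivers.
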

\begin{proof}
Let $G$ be a graph with one pendant vertex, say $v$ and $X$ be the ground set for labeling the vertices of $G$. Choose the collection $\mathcal{B}$ of proper subsets of $X$ which contains the set $\{0\}$ and has the cardinality $n-1$ such that the sum of the maximal elements of any two sets in it is less than or equal to the maximal element of $X$ and the union of any two sets and the intersection of any two non-singleton sets in $\mathcal{B}$ are also in $\mathcal{B}$. Then, by Theorem \ref{T-TIASI1}, the set-labeling $f$ under which the pendant vertex $v$ is labeled by the set $X$ and other vertices of $G$ are labeled by the elements of $\mathcal{B}$ is a TIASL of $G$.

Let $f_1$ be the restriction of $f$ to the graph $G-v$. Therefore, $\mathcal{B}=f_1(V(G-v))$. Now let $B=\bigcup_{B_i\in \mathcal{B}}B_i$ and let $\mathcal{T'}=\mathcal{B}\cup \{\emptyset\}$. Since $G$ has only one end vertex, by Proposition \ref{P-TIASI2a}, no element of $\mathcal{A}$ contains the maximal element of $X$. Therefore, $B$ also does not contain the maximal element of $X$. Since the union of any number of sets in $\mathcal{B}$ is also in $\mathcal{B}$, the union of the elements in $\mathcal{T'}$. Then, $B$ belongs to $\mathcal{B}$ and to $\mathcal{T'}$ and $B$ is the maximal element of $\mathcal{T'}$. Since the intersection of any two non-singleton sets in $\mathcal{B}$ is also in $\mathcal{B}$ and $\emptyset \in \mathcal{T'}$, the finite intersection of elements in $\mathcal{T'}$ is also in $\mathcal{T'}$. The set $\mathcal{T'}=\mathcal{B}\cup \{\emptyset\}$ is a topology of the maximal set $B$ in $\mathcal{B}$.
\end{proof}

\begin{remark}{\rm 
If $v$ is the only pendant vertex of a given graph $G$, then the collection $\mathcal{B}=f(V(G-v))$, chosen as explained in Theorem \ref{T-TIASI1a} does not induce a topological IASL on the graph $G-v$, since $f^+(uw)\neq f(u)+f(w)$, for some edge $uw\in E(G-v)$.}
\end{remark}

\section{TIASLs with respect to Certain Topologies}

The number of elements in the ground set $X$ is very important in all the studies of set-labeling of graphs. Keeping this in mind, we define

\begin{definition}{\rm 
The minimum cardinality of the ground set $X$ required for a given graph to admit a topological IASL is known as the {\em topological set-indexing number} (topological set-indexing number) of that graph.}
\end{definition}

In this section, we discuss the existence and admissibility of topological IASLs with respect to some standard topologies like indiscrete topologies and discrete topologies.

A topology $\mathcal{T}$ is said to be an indiscrete topology of $X$ if $\mathcal{T}=\{\emptyset,X\}$. Hence the following result is immediate.

\begin{theorem}\label{T-TIASI4}
A graph $G$ admits a TIASL with respect to the indiscrete topology $\mathcal{T}$ if and only if $G\cong K_1$.
\end{theorem}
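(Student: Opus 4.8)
The plan is to prove both directions of the equivalence, with the reverse direction being essentially immediate and the forward direction requiring a short structural argument. Recall that an indiscrete topology is $\mathcal{T}=\{\emptyset, X\}$, so a TIASL $f$ realising $\mathcal{T}$ must have $f(V(G))\cup\{\emptyset\}=\{\emptyset, X\}$, which forces $f(V(G))=\{X\}$. Since $f$ is injective, this means $G$ has exactly one vertex, and that vertex carries the set-label $X$.

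First I would handle the trivial direction: if $G\cong K_1$, take the single vertex and label it with $X$. Then $f(V(G))=\{X\}$ and $f(V(G))\cup\{\emptyset\}=\{\emptyset,X\}=\mathcal{T}$, which is indeed the indiscrete topology on $X$. Since $K_1$ has no edges, the associated edge function $f^+$ imposes no constraints, so $f$ is vacuously a valid TIASL. Hence $K_1$ admits a TIASL with respect to the indiscrete topology.

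For the forward direction, suppose $G$ admits a TIASL $f$ with respect to $\mathcal{T}=\{\emptyset,X\}$. Then $f(V(G))=\mathcal{T}-\{\emptyset\}=\{X\}$, so every vertex of $G$ must receive the set-label $X$. Because $f$ is injective, $G$ can have only one vertex. Therefore $G\cong K_1$. Alternatively, and perhaps more in the spirit of the preceding results, I could invoke Remark \ref{R-IASGL1.1}: since $X\in f(V(G))$, the set $\{0\}$ would also have to be the set-label of some vertex adjacent to the vertex labeled $X$; but $\{0\}\notin\{X\}=f(V(G))$ unless $X=\{0\}$ has a single element, and even then injectivity collapses the vertex set to a single point. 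This confirms that no graph with more than one vertex can realise the indiscrete topology.

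The argument involves no real obstacle; the only subtlety worth flagging is that one must ensure consistency with the convention, noted just before Definition of TIASL, that empty-set labels are disallowed and that a TIASL-graph has no isolated vertices in general. For $G\cong K_1$ this is the single admissible exception, since the lone vertex is neither pendant nor part of any edge, yet the labeling condition $f(V(G))\cup\{\emptyset\}=\mathcal{T}$ is satisfied trivially. I would therefore state explicitly that the standing no-isolated-vertex assumption is relaxed for this degenerate case, so that the theorem reads cleanly.
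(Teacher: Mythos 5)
Your proof is correct and follows essentially the same route as the paper: both directions reduce to the observation that $f(V(G))=\mathcal{T}-\{\emptyset\}=\{X\}$ together with injectivity of $f$ forces $|V(G)|=1$, and the converse is the trivial labeling of the lone vertex by $X$. Your added remark about relaxing the standing no-isolated-vertex convention for $K_1$ is a reasonable point of hygiene that the paper leaves implicit, but it does not change the argument.
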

\begin{proof}
Let $v$ be the single vertex of the graph  $G=K_1$. Let $X$ be the ground set for set-labeling $G$. Let $f(v)=X$. Then $f(V)=\{X\}$ and $f(V)\cup \{\emptyset\}=\{\emptyset, X\}$, which is the indiscrete topology on $X$. Conversely, assume that $G$ admits a TIASL with respect to the indiscrete topology $\mathcal{T}$ of the ground set $X$. Then, $f(V(G))=\mathcal{T}-\{\emptyset\}=\{X\}$, a singleton set. Therefore, $G$ can have only a single vertex. That is, $G\cong K_1$.
\end{proof}

\noindent From Proposition \ref{T-TIASI4}, we have the following result.

\begin{proposition}\label{P-TIASI4a}
The topological set-indexing number of $K_1$ is $1$.
\end{proposition}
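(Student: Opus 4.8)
The plan is to establish the two inequalities that together pin down the minimum cardinality of the ground set. First I would settle the lower bound: by the standing conventions of the paper, the ground set $X$ is a non-empty finite set of non-negative integers, so any admissible ground set satisfies $|X|\ge 1$. Consequently the topological set-indexing number of every graph, and of $K_1$ in particular, is at least $1$; this direction needs no graph-specific argument at all.

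Next I would show that $|X|=1$ actually suffices for $K_1$ by exhibiting a concrete labeling. Take $X=\{0\}$, so that $\mathcal{P}(X)-\{\emptyset\}=\{\{0\}\}$, and let $v$ be the unique vertex of $K_1$. Define $f(v)=\{0\}=X$. Since $K_1$ has a single vertex, $f$ is vacuously injective and the sumset condition on edges is void, so $f$ is an integer additive set-labeling. Then $f(V(K_1))=\{X\}$ and $f(V(K_1))\cup\{\emptyset\}=\{\emptyset,X\}$, which is the indiscrete topology on $X$. Hence $f$ is a TIASL, and this is exactly the construction underlying Theorem \ref{T-TIASI4}. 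Combining the two steps yields that the smallest cardinality of a ground set admitting a TIASL of $K_1$ is precisely $1$.

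I do not anticipate a genuine obstacle; the only point requiring a moment's care is to confirm that the singleton choice $X=\{0\}$ does not conflict with Remark \ref{R-IASGL1.1}, which in the general setting forces the set-labels $X$ and $\{0\}$ to sit on two distinct adjacent vertices. In the singleton case this concern evaporates, because $\{0\}$ and $X$ coincide, so the condition is met by the single vertex itself without invoking any edge. Thus the verification reduces to checking that $\{\emptyset,X\}$ is a topology, which is immediate.
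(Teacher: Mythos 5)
Your proof is correct and follows essentially the same route as the paper, which presents this proposition as an immediate consequence of Theorem \ref{T-TIASI4}: a singleton ground set $X=\{0\}$ with $f(v)=X$ realises the indiscrete topology on $K_1$, and no smaller ground set is possible since $X$ must be non-empty. Your extra remark reconciling the singleton case with Remark \ref{R-IASGL1.1} is a sensible precaution but does not change the argument.
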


Another basic topology of a set $X$ is the {\em Sierpenski's topology}. If $X$ is a two point set, say $X=\{0,1\}$, then the topology $\mathcal{T}_1=\{\emptyset, \{0\},X\}$ and  $\mathcal{T}_2=\{\emptyset, \{0\},X\}$ are the Sierpenski's topologies. The following result establishes the conditions required for a graph to admit a TIASL with respect to the Sierpenski's topology. 

\begin{theorem}\label{T-TIASI5}
A graph $G$ admits a TIASL with respect to the Sierpenski's topology if and only if $G\cong K_2$.
\end{theorem}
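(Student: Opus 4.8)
The plan is to pin down the structure forced by the two-element family $\mathcal{T}-\{\emptyset\}$ and then exhibit the unique realisation. Recall that on the two-point set $X=\{0,1\}$ the Sierpenski's topology is $\mathcal{T}=\{\emptyset,\{0\},X\}$, so that $\mathcal{T}-\{\emptyset\}=\{\{0\},\{0,1\}\}$ has exactly two members. This observation drives both directions.

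For the forward direction, I would suppose that $f:V(G)\to\mathcal{P}(X)-\{\emptyset\}$ is a TIASL with $f(V(G))\cup\{\emptyset\}=\mathcal{T}$. Since $f$ is injective and $f(V(G))=\{\{0\},X\}$, the graph $G$ has exactly two vertices. I would then invoke Remark \ref{R-IASGL1.1}: because $X\in f(V(G))$, the vertex carrying the label $X$ must be adjacent to the vertex carrying the label $\{0\}$, so the two vertices of $G$ are joined by an edge. As all graphs under consideration are simple with no isolated vertices, the only such graph on two vertices is $K_2$, giving $G\cong K_2$.

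For the converse, I would simply construct the labeling and check it. Take $K_2$ with vertices $u,v$, set $X=\{0,1\}$, and put $f(u)=\{0\}$ and $f(v)=X$. The unique edge then receives the label $f(u)+f(v)=\{0\}+\{0,1\}=\{0,1\}=X$, which is a subset of $X$, so $f$ is a genuine IASL. Finally $f(V(G))\cup\{\emptyset\}=\{\emptyset,\{0\},X\}$ is precisely the Sierpenski's topology, so $f$ is a TIASL with respect to it.

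I expect the argument to be short with no serious obstacle; the cardinality bound on $\mathcal{T}-\{\emptyset\}$ does almost all of the work. The one point requiring a moment's care is the verification that the forced labeling is a legitimate IASL, i.e. that the unique candidate edge-label $\{0\}+X$ remains inside $X$; here it equals $X$ itself, so the sumset condition $f(u)+f(v)\subseteq X$ holds with equality. For consistency one could also remark that this agrees with Proposition \ref{P-TIASI2} and Theorem \ref{T-TIASI1}: the vertex labelled $X$ contains the maximal element $1$ and is indeed a pendant vertex adjacent to the vertex labelled $\{0\}$.
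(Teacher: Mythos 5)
Your proposal is correct and follows essentially the same route as the paper: both directions hinge on the observation that $\mathcal{T}-\{\emptyset\}=\{\{0\},X\}$ forces exactly two vertices, and the converse is the same explicit labeling of $K_2$. Your version is in fact slightly more complete, since you explicitly verify the edge-label condition $\{0\}+X=X\subseteq X$ and justify the presence of the edge via Remark \ref{R-IASGL1.1}, steps the paper leaves implicit.
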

\begin{proof}
Let $G$ be the given graph, with vertex set $V$, which admits a TIASL with respect to the Sierpenski's topology. Let a two point set $X=\{0,1\}$ be the ground set used for set-labeling the graph $G$. Then, $f(V)=\{\{0\},X\}$. Therefore, $G$ can have exactly two vertices. That is, $G\cong K_2$.

Conversely, assume that $G\cong K_2$. Let $u$ and $v$ be the two vertices of $G$. Choose a two point set $X$ as the ground set to label the vertices of $G$. Label the vertex $u$ by $X$. Then by Proposition \ref{P-TIASI1}, $v$ must have the set-label $\{0\}$. Then $f(V(G))=\{\{0\},X\}$. Then, $f(v(G))\cup \{\emptyset\}$ is a topology on $X$, which is a Sierpenski's topology of $X$. Therefore, $G\cong K_2$ admits a TIASL with respect to the Sierpenski's topology.
\end{proof}

From the above result, we observe the following.
\begin{observation}
The only Sierpenski's topology of the two point set $X=\{0,1\}$ that induces a TIASL on the graph $K_2$ is $\mathcal{T}=\{\emptyset,\{0\},X\}$.
\end{observation}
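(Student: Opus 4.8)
The plan is to pin down exactly which Sierpenski-type topology on the two-point set $X=\{0,1\}$ can serve as the range (together with $\emptyset$) of a TIASL, using the structural constraints already developed for integer additive set-labelings. The key observation is that the definition of a TIASL forces $0$ to play a privileged role: by Remark~\ref{R-IASGL1.1}, whenever $X$ is the set-label of a vertex, the singleton $\{0\}$ must also appear as the set-label of an adjacent vertex. Consequently any topology that induces a TIASL must contain $\{0\}$ as a member, which immediately rules out the other candidate $\{\emptyset,\{1\},X\}$.

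First I would recall that on $X=\{0,1\}$ there are exactly two topologies strictly between the indiscrete and discrete ones, namely $\{\emptyset,\{0\},X\}$ and $\{\emptyset,\{1\},X\}$; these are the two Sierpenski's topologies. Next I would invoke Theorem~\ref{T-TIASI5}: a graph admits a TIASL with respect to a Sierpenski's topology precisely when it is $K_2$, and the proof of that theorem constructs the labeling by assigning $X$ to one vertex and $\{0\}$ to the other. This shows that the topology actually realised is $\mathcal{T}=\{\emptyset,\{0\},X\}$. The final step is to argue that the competing topology $\{\emptyset,\{1\},X\}$ cannot arise: if it did, then $f(V(K_2))=\{\{1\},X\}$, so some vertex would carry the set-label $\{1\}$ while its neighbour carries $X$. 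Since $1$ is the maximal element $l$ of $X$ and $1\in\{1\}$, Proposition~\ref{P-TIASI2} (equivalently the sumset bound $1+1=2\notin X$) yields a contradiction with $f^+$ taking values in $\mathcal{P}(X)$. Hence $\{0\}$, not $\{1\}$, must be an open set.

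I expect the main obstacle to be purely expository rather than mathematical: the heavy lifting is already done by Remark~\ref{R-IASGL1.1}, Proposition~\ref{P-TIASI2} and Theorem~\ref{T-TIASI5}, so the task is to assemble these into a clean elimination of the $\{1\}$-topology and to note that the two displayed topologies $\mathcal{T}_1$ and $\mathcal{T}_2$ in the paper's definition of Sierpenski's topology are in fact typeset identically (both written as $\{\emptyset,\{0\},X\}$), so care is needed to interpret the intended second topology as $\{\emptyset,\{1\},X\}$. With that interpretation fixed, the observation follows at once and no further computation is required.
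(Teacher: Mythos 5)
Your proposal is correct and follows essentially the same route the paper intends: the paper states this observation without a separate proof (deriving it from Theorem \ref{T-TIASI5}, whose construction assigns $\{0\}$ and $X$ to the two vertices, together with Remark \ref{R-IASGL1.1} forcing $\{0\}$ to appear as a set-label), and your elimination of $\{\emptyset,\{1\},X\}$ via the sumset overflow $\{1\}+\{0,1\}=\{1,2\}\not\subseteq X$ is exactly the implicit argument made explicit. Your parenthetical note about the typo in the paper's two displayed Sierpenski topologies is also accurate and worth flagging.
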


In view of Proposition \ref{T-TIASI5}, we claim that for any ground set $X$ containing two or more elements, one of which is $0$, induces a TIASL on $K_2$. Therefore,  the following result is immediate.

\begin{proposition}
The topological set-indexing number of $K_2$ is $2$.
\end{proposition}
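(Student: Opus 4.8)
The plan is to prove that the topological set-indexing number of $K_2$ is $2$ by establishing both a lower bound and an upper bound on the minimum cardinality of a ground set $X$ that admits a TIASL on $K_2$. First I would recall that, by Theorem \ref{T-TIASI5}, $K_2$ admits a TIASL with respect to the Sierpenski's topology on the two-point set $X=\{0,1\}$, which immediately shows that a ground set of cardinality $2$ suffices. This gives the upper bound: the topological set-indexing number is at most $2$.

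For the lower bound, I would argue that no ground set of cardinality $1$ can work. If $|X|=1$, then $X=\{0\}$ (since $0\in X$ is forced by Remark \ref{R-IASGL1.1}, as $X$ must be the set-label of some vertex and $\{0\}$ the set-label of an adjacent vertex), and the only non-empty subset available for labeling is $X$ itself. Then $f(V(G))\cup\{\emptyset\}=\{\emptyset,X\}$ is the indiscrete topology, and by Theorem \ref{T-TIASI4} such a TIASL forces $G\cong K_1$, not $K_2$. More directly, a TIASL on $K_2$ requires two distinct non-empty set-labels for its two vertices, so $\mathcal{P}(X)-\{\emptyset\}$ must contain at least two elements, which fails when $|X|=1$. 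Hence the cardinality of $X$ must be at least $2$.

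Combining the two bounds, the minimum cardinality of a ground set admitting a TIASL on $K_2$ is exactly $2$, which by definition is the topological set-indexing number. The main obstacle, such as it is, will be simply to phrase the lower-bound argument cleanly: the substance is that injectivity of $f$ on the two vertices of $K_2$ demands at least two distinct non-empty subsets, and a single-element ground set cannot supply them. Since this is essentially an immediate consequence of Theorem \ref{T-TIASI5} together with the definition of the topological set-indexing number, I would keep the proof short and point back to those results rather than reconstructing the full machinery.
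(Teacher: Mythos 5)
Your proposal is correct and follows essentially the same route as the paper, which simply declares the result immediate from Theorem \ref{T-TIASI5} (admissibility of a TIASL on $K_2$ with the two-point ground set $\{0,1\}$). Your explicit lower-bound argument --- that $|X|=1$ leaves only one non-empty subset and so cannot label the two vertices of $K_2$ injectively --- fills in a step the paper leaves unstated, but it is the same underlying reasoning.
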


\noindent The following results are the immediate consequences of \ref{T-TIASI1}.

\begin{proposition}\label{P-TIASI6}
For $n\ge3$, no complete graph $K_n$ admits a TIASL.
\end{proposition}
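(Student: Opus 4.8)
The plan is to argue by contradiction using the pendant-vertex characterisation already in hand. First I would record the elementary structural fact about complete graphs: in $K_n$ every vertex is adjacent to each of the remaining $n-1$ vertices, so $K_n$ is $(n-1)$-regular. For $n\ge 3$ this forces every vertex to have degree $n-1\ge 2$, and hence $K_n$ contains no vertex of degree $1$; that is, $K_n$ has no pendant vertex whatsoever.

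Next I would bring in the necessary condition for admissibility. By Proposition \ref{P-TIASI1}, every graph that carries a TIASL must possess at least one pendant vertex (equivalently, this is the ``only if'' direction of Theorem \ref{T-TIASI1}). Combining this with the observation above, suppose for contradiction that $K_n$ admitted a TIASL $f$ for some $n\ge 3$. Then $K_n$ would necessarily contain a pendant vertex, contradicting the fact that $K_n$ is $(n-1)$-regular with $n-1\ge 2$. Therefore no such labeling $f$ can exist, and the stated conclusion follows.

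I do not anticipate any genuine obstacle here: the proposition is a direct corollary of the characterisation in Theorem \ref{T-TIASI1}, and its only mathematical content is the trivial verification that $K_n$ has minimum degree at least $2$ as soon as $n\ge 3$. The single point meriting a sentence of care is the boundary value $n=2$, where $K_2$ \emph{does} admit a TIASL (as established for the Sierpenski's topology in Theorem \ref{T-TIASI5}); noting this confirms that the hypothesis $n\ge 3$ is exactly the condition under which pendant vertices are ruled out, so the restriction in the statement is sharp.
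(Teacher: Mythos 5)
Your argument is correct and is essentially identical to the paper's own proof, which likewise deduces the result from Theorem \ref{T-TIASI1} together with the observation that $K_n$ for $n\ge 3$ has no pendant vertex. Your extra remarks on $(n-1)$-regularity and the sharpness at $n=2$ are fine but add nothing beyond the paper's one-line justification.
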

\begin{proof}
The proof follows from Theorem \ref{T-TIASI1} and from the fact that a complete graph on more than two vertices does not have any pendant vertex.
\end{proof}

\begin{proposition}\label{P-TIASI7}
For $m,n\ge 2$, no complete bipartite graph $K_{m,n}$ admits a TIASL.
\end{proposition}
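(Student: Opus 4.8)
The plan is to invoke Theorem~\ref{T-TIASI1}, which characterises the admissibility of a TIASL purely in terms of the existence of a pendant vertex. Since that theorem reduces the question entirely to a degree count, the whole argument consists of verifying that $K_{m,n}$ has no vertex of degree one when $m,n\ge 2$.

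First I would recall the structure of the complete bipartite graph $K_{m,n}$: its vertex set partitions into two independent parts $A$ and $B$ with $|A|=m$ and $|B|=n$, and every vertex of $A$ is joined to every vertex of $B$ (and to no other vertex). Consequently each vertex in $A$ has degree exactly $n$ and each vertex in $B$ has degree exactly $m$.

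Next, using the hypothesis $m,n\ge 2$, I would observe that every vertex of $K_{m,n}$ has degree at least $2$, so $K_{m,n}$ contains no pendant vertex. By the forward direction of Theorem~\ref{T-TIASI1} -- namely that any graph admitting a TIASL must possess at least one pendant vertex -- it then follows immediately that $K_{m,n}$ cannot admit a TIASL. This is exactly parallel to the argument used for Proposition~\ref{P-TIASI6}.

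There is essentially no obstacle here; the result is a direct corollary of Theorem~\ref{T-TIASI1} once the degree sequence is written down. The only role played by the stated hypothesis is to exclude the degenerate cases $m=1$ or $n=1$, for which $K_{1,n}$ is a star and \emph{does} contain pendant vertices (indeed it is precisely the type of graph shown earlier to admit a TIASL). Hence the bound $m,n\ge 2$ is sharp, and I would note this at the end of the proof.
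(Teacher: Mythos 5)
Your proposal is correct and follows exactly the same route as the paper: both invoke Theorem~\ref{T-TIASI1} and observe that $K_{m,n}$ with $m,n\ge 2$ has every vertex of degree at least two, hence no pendant vertex. Your write-up is in fact slightly more explicit than the paper's one-line proof, and the remark on sharpness at $m=1$ or $n=1$ is a nice touch but not needed.
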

\begin{proof}
The proof is immediate from the fact that a complete bipartite graph has no pendant vertices.
\end{proof}

\begin{corollary}
A path graph $P_m$ admits a TIASL.
\end{corollary}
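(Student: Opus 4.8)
The plan is to apply Theorem~\ref{T-TIASI1} directly, since it gives a clean characterization: a graph admits a TIASL if and only if it has at least one pendant vertex. Therefore the entire argument reduces to exhibiting a pendant vertex in a path graph $P_m$. First I would recall that a path $P_m$ on $m$ vertices is the graph $v_1 v_2 \cdots v_m$ with edge set $\{v_i v_{i+1} : 1 \le i \le m-1\}$, so each of the two end vertices $v_1$ and $v_m$ has degree exactly one and is thus a pendant vertex (for $m \ge 2$).

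The key step is then the invocation of the characterization theorem: having established that $P_m$ possesses (at least) one pendant vertex, Theorem~\ref{T-TIASI1} immediately yields that $P_m$ admits a TIASL. This is essentially a one-line deduction once the structural fact about the endpoints is stated.

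The only genuine subtlety worth flagging is the degenerate case $m = 1$, where $P_1 \cong K_1$ is a single isolated vertex with no edges and no pendant vertex in the usual sense; this case is already covered separately by Theorem~\ref{T-TIASI4}, so I would either restrict to $m \ge 2$ or simply note that the claim is understood for nontrivial paths. Apart from this boundary consideration, there is no real obstacle: the heavy lifting was done in Theorem~\ref{T-TIASI1}, and the corollary is a straightforward specialization. If a constructive flavour were desired instead, one could alternatively reproduce the explicit labeling from the proof of Theorem~\ref{T-TIASI1}, assigning $f(v_i) = \{0,1,\ldots,i-1\}$ for the interior vertices and $f(v_1) = X$ with $X = \{0,1,\ldots,2m-3\}$ for the pendant endpoint, but the cleanest route is the direct appeal to the characterization.
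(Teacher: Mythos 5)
Your argument is exactly the paper's: both simply observe that a path has (two) pendant end-vertices and invoke Theorem~\ref{T-TIASI1}. Your additional remark about the degenerate case $m=1$ is a reasonable refinement the paper omits, but the approach is the same.
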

\begin{proof}
Every path graph $P_m$ has two pendant vertices and hence satisfy the condition mentioned in Theorem \ref{T-TIASI1}. Hence $P_m$ admits a TIASL.
\end{proof}

\begin{proposition}
Every tree admits a TIASL.
\end{proposition}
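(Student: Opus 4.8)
The plan is to reduce the claim directly to the characterization already proved in Theorem \ref{T-TIASI1}, which asserts that a graph admits a TIASL if and only if it has at least one pendant vertex. Consequently, the entire task is to verify that every tree possesses a pendant vertex, after which the theorem applies verbatim. This makes the statement essentially a corollary rather than an independent result.

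First I would recall the standard structural facts about a tree $T$ on $n$ vertices: it is connected, acyclic, and has exactly $n-1$ edges. The key step is to exhibit a pendant vertex, and I would do this by a degree-counting argument. By the handshaking lemma the degree sum equals $2(n-1)=2n-2$, so if every vertex had degree at least $2$ the degree sum would be at least $2n$, a contradiction. Since $T$ is connected and, for $n\ge 2$, contains no isolated vertex, the offending vertex of degree less than $2$ must in fact have degree exactly $1$, i.e. it is a pendant vertex. Alternatively, I could pick a longest path in $T$ and argue that each of its two endpoints is a leaf, which has the bonus of producing at least two pendant vertices.

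Having established that every tree has at least one pendant vertex, I would then invoke Theorem \ref{T-TIASI1} to conclude that $T$ admits a TIASL, which completes the proof.

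I do not expect any genuine obstacle: the conclusion follows immediately from Theorem \ref{T-TIASI1}, and the only additional ingredient is the elementary and classical fact that every nontrivial tree has a leaf. The sole point requiring minor care is the degenerate single-vertex tree $K_1$, which has no pendant vertex; but since the paper restricts attention to graphs without isolated vertices, every tree under consideration has $n\ge 2$ vertices, for which the leaf argument applies without exception.
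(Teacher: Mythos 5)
Your proposal is correct and follows essentially the same route as the paper, which likewise deduces the result from the pendant-vertex characterization (Theorem \ref{T-TIASI1}) together with the fact that every nontrivial tree has at least one leaf. The only difference is that you spell out the standard degree-counting (or longest-path) argument for the existence of a leaf, which the paper simply takes as known.
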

\begin{proof}
Since every tree $G$ has at least two pendant vertices, by Theorem \ref{P-TIASI1}, $G$ admits a TIASL.
\end{proof}

\begin{proposition}\label{P-TIASI8}
No cycle graph $C_n$ admits a TIASL.
\end{proposition}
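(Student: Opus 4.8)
The plan is to invoke the characterization established in Theorem \ref{T-TIASI1}, which states that a graph $G$ admits a TIASL if and only if $G$ has at least one pendant vertex. Since the entire structural theory developed above reduces the admissibility question to the existence of a pendant (degree-one) vertex, the proof for the cycle $C_n$ becomes a purely elementary observation about its degree sequence.

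First I would recall the defining structural property of a cycle graph $C_n$: every vertex of $C_n$ has degree exactly $2$. This is immediate from the definition of a cycle, in which each vertex is adjacent to precisely two other vertices. Consequently, $C_n$ contains no vertex of degree $1$, that is, no pendant vertex. I would then contrast this with the necessity direction of Theorem \ref{T-TIASI1}: if $C_n$ admitted a TIASL, then by that theorem it would necessarily possess at least one pendant vertex, contradicting the fact that every vertex of $C_n$ has degree $2$. Hence no cycle graph $C_n$ can admit a TIASL.

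The proof is therefore essentially a one-line application of the established necessary condition, entirely parallel to the arguments already given for Proposition \ref{P-TIASI6} (complete graphs $K_n$ with $n \ge 3$) and Proposition \ref{P-TIASI7} (complete bipartite graphs $K_{m,n}$ with $m,n \ge 2$), both of which fail to admit a TIASL for the same reason, namely the absence of pendant vertices. There is no genuine obstacle here: the only point requiring care is to state clearly that the regularity of $C_n$ (every vertex having degree $2 > 1$) rules out any pendant vertex, after which Theorem \ref{T-TIASI1} closes the argument immediately. I would simply write: ``The proof follows directly from Theorem \ref{T-TIASI1} and from the fact that a cycle graph $C_n$ is $2$-regular and hence has no pendant vertex.''
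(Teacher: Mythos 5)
Your proposal is correct and follows exactly the same route as the paper: the paper's proof also simply notes that a cycle has no pendant vertex and then invokes Theorem \ref{T-TIASI1}. Your additional remark that $C_n$ is $2$-regular just makes the absence of pendant vertices explicit, which is a harmless elaboration.
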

\begin{proof}
A cycle does not have any pendant vertex. Then, the proof follows immediately by Theorem \ref{T-TIASI1}.
\end{proof}

In view of the above results, we arrive at the following inference.

\begin{proposition} 
For $k\ge 2$, no $k$-connected graph admits a TIASL with respect to a ground set $X$. 
\end{proposition}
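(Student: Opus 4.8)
The plan is to reduce the statement entirely to Theorem \ref{T-TIASI1}, which asserts that a graph admits a TIASL if and only if it possesses at least one pendant vertex. Consequently, it suffices to prove the purely structural fact that no $k$-connected graph with $k \geq 2$ contains a pendant vertex; the non-admissibility of a TIASL then follows immediately by the contrapositive of that characterisation, and does so uniformly over every choice of ground set $X$ (which is why $X$ need not be fixed in advance).

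To establish the structural fact, I would first recall that any $k$-connected graph $G$ has vertex connectivity $\kappa(G) \geq k$ and at least $k+1$ vertices. I would then invoke the standard inequality $\kappa(G) \leq \delta(G)$ relating connectivity to the minimum degree $\delta(G)$. Combining these gives $\delta(G) \geq k \geq 2$, so every vertex of $G$ has degree at least $2$; in particular $G$ has no vertex of degree $1$, that is, no pendant vertex.

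Alternatively, and more self-containedly, I would argue by contradiction: suppose $v$ were a pendant vertex of $G$, adjacent to a unique neighbour $u$. Since $G$ is $k$-connected with $k \geq 2$, it has at least three vertices, so there is a vertex $w \notin \{u,v\}$. Deleting the single vertex $u$ then separates $v$ from $w$, contradicting the $2$-connectivity that $k$-connectivity (for $k \geq 2$) entails. Hence $G$ can have no pendant vertex.

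Applying Theorem \ref{T-TIASI1} to the conclusion that $G$ has no pendant vertex then yields that $G$ admits no TIASL with respect to any ground set $X$, as required. I expect no genuine obstacle here: the entire content of the proposition is the elementary observation that $k$-connectivity for $k \geq 2$ rules out degree-one vertices, and the only point needing care is confirming that a $k$-connected graph has enough vertices (at least $k+1 \geq 3$) for the single-vertex deletion argument to actually produce a separation.
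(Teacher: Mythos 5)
Your proposal is correct and follows essentially the same route as the paper: the paper's proof simply notes that a biconnected graph has no pendant vertices and then invokes the pendant-vertex characterisation (Theorem \ref{T-TIASI1}), exactly as you do, though you supply the elementary justification (via $\delta(G)\ge\kappa(G)\ge k\ge 2$ or the single-vertex separation argument) that the paper leaves implicit. Incidentally, the paper's proof cites Theorem \ref{T-TIASI2} where it clearly means Theorem \ref{T-TIASI1}; your attribution is the correct one.
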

\begin{proof}
No biconnected graph $G$ can have pendant vertices. Hence, by Theorem \ref{T-TIASI2}, $G$ can not admit a TIASL.
\end{proof}

We have already discussed the admissibility of a TIASL by a graph with respect to the indiscrete topology of the ground set $X$.  In this context, it is natural to ask whether a given graph admits the TIASL with respect to the discrete topology of a given set $X$. The following theorem establishes the condition required for $G$ to admit a TIASL with respect to the discrete topology of $X$. 

\begin{theorem}\label{T-TIASI2}
A graph $G$, on $n$ vertices, admits a TIASL with respect to the discrete topology of the ground set $X$ if and only if $G$ has at least $2^{|X|-1}$ pendant vertices which are adjacent to a single vertex of $G$.
\end{theorem}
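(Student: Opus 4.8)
The plan is to exploit that realising the \emph{discrete} topology means $f(V(G))\cup\{\emptyset\}=\mathcal{P}(X)$, i.e.\ $f$ is a bijection from $V(G)$ onto the $2^{|X|}-1$ non-empty subsets of $X$; in particular $G$ must have exactly $n=2^{|X|}-1$ vertices. The whole argument is then driven by the adjacency restriction of Proposition \ref{P-TIASI2}.

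For the necessity I would fix the maximal element $l$ of $X$ and count the non-empty subsets that contain $l$: keeping $l$ and choosing freely among the other $|X|-1$ elements yields exactly $2^{|X|-1}$ of them. Each such subset is the label of some vertex, and by Proposition \ref{P-TIASI2} every vertex whose label contains $l$ is a pendant vertex adjacent to the vertex labelled $\{0\}$, whose existence is secured by Remark \ref{R-IASGL1.1}. Since $\{0\}$ is a single subset, injectivity of $f$ makes this vertex unique, so all $2^{|X|-1}$ of these pendant vertices hang off one common vertex, as required.

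For the sufficiency I would take the vertex $u$ carrying the $2^{|X|-1}$ pendant neighbours, set $f(u)=\{0\}$, and assign the $2^{|X|-1}$ subsets containing $l$ bijectively to those pendant neighbours; because $\{0\}+A=A\subseteq X$ for every such $A$, all the resulting pendant edges automatically satisfy $f^{+}(e)\subseteq X$. It then remains to spread the remaining $2^{|X|-1}-1$ non-empty subsets of $X\setminus\{l\}$ over the remaining vertices so that $f$ becomes the desired bijection onto $\mathcal{P}(X)\setminus\{\emptyset\}$.

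I expect two difficulties to be the crux. First, the hypothesis ``at least $2^{|X|-1}$ pendant vertices'' alone does not pin down $n=2^{|X|}-1$, so the converse really needs this cardinality to be available; I would handle it by reading the pendant condition together with the count forced in the first paragraph. Second, and more seriously, an edge joining two vertices both labelled inside $X\setminus\{l\}$ need \emph{not} have its sumset in $X$, since two subsets whose maximal elements are each close to $l$ may sum past $l$. Closing this gap requires matching the forced labels to the non-pendant vertices so that every such edge $xy$ obeys $\max f(x)+\max f(y)\le l$; the natural device is to label the non-pendant vertices along a monotone ordering by maximal element, but verifying that the given adjacencies among them are compatible with such an ordering is the delicate step, and may demand an extra structural assumption on the non-pendant part of $G$.
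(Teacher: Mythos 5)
Your argument follows essentially the same route as the paper's. The necessity direction is identical: count the $2^{|X|-1}$ non-empty subsets containing the maximal element $l$, observe via Proposition \ref{P-TIASI2} that each must label a pendant vertex hanging off the (unique, by injectivity) vertex labelled $\{0\}$. For sufficiency, the paper does exactly what you outline --- it assigns the subsets containing $l$ to the pendant vertices and then labels the remaining vertices by the remaining $2^{|X|-1}-1$ subsets ``in such a way that the sum of the maximal elements of the set-labels of two adjacent vertices is less than or equal to $l$'' --- and it simply asserts that this can be done. The difficulty you flag in your final paragraph is therefore genuine and is \emph{not} resolved in the paper either: for a graph whose non-pendant part is densely connected (e.g.\ take $X=\{0,1,2,3\}$ and let the $2^{|X|-1}-1=7$ non-pendant vertices induce a complete graph, forcing adjacencies such as $\{1,2\}$ with $\{0,1,2\}$, whose sumset contains $4\notin X$), no admissible assignment of the remaining subsets exists, so the stated hypothesis of at least $2^{|X|-1}$ pendant vertices on a common neighbour is not by itself sufficient. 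Your instinct that the converse needs an extra structural assumption on the non-pendant part of $G$ (or at least a verification that the given adjacencies are compatible with a labelling ordered by maximal elements) is correct; to the extent your proposal is incomplete, it is incomplete in precisely the place where the paper's own proof is incomplete, and you have done the additional service of locating that weakness explicitly.
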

\begin{proof}
Let $|X|=m$. Let the graph $G$ admits a TIASL $f$ with respect to the discrete topology $\mathcal{T}$ of $X$. Therefore, $f(V(G))=\mathcal{P}(X)-\{\emptyset\}$. Then, $|f(V(G))|=2^{|X|}-1$. Now, let $l$ be the maximal element in $X$. The number of subsets of $X$ containing $l$ is $2^{m-1}$. Since  $f$ is a TIASL with respect to the discrete topology, all these sets containing $l$ must also be the set-labels of some vertices of $G$. By Proposition \ref{P-TIASI2}, all these vertices must be adjacent to the vertex whose set-label is $\{0\}$. By Proposition \ref{P-TIASI3}, no two vertices whose set-labels contain $l$ can be adjacent among themselves or to any other vertex which has a set-label with non-zero elements. Therefore, $G$ has $2^{m-1}$ pendant vertices which are adjacent to a single vertex whose set-label is $\{0\}$.

Conversely, let $G$ be a graph with $n=2^{|X|}-1$ vertices such that at least $2^{|X|-1}$ of them are pendant vertices incident on a single vertex of $G$. Label these pendant vertices by the $2^{|X|-1}$ subsets of $X$ containing the maximal element $l$ of $X$. Label remaining vertices of $G$ by the remaining $2^{|X|-1}-1$ subsets of $X$ which do not contain the element $l$, in such a way that the sum of the maximal elements of the set-labels of two adjacent vertices is less than or equal to $l$. This labeling is clearly a TIASL on $G$. That is, $G$ admits a TIASL with respect to the discrete topology of $X$.  
\end{proof}

Figure \ref{G-TIASL2a} depicts the existence of a TIASL with respect to the discrete topology of a ground set $X=\{0,1,2,3\}$ for a graph $G$.

\begin{figure}[h!]
\centering
\includegraphics[scale=0.45]{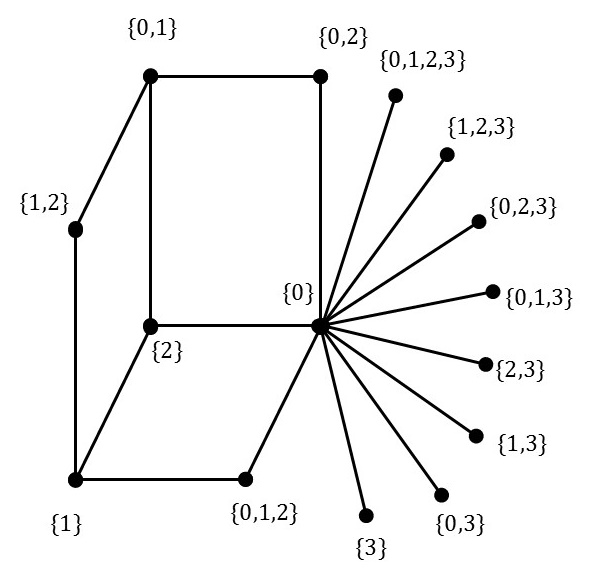}
\caption{\small \sl A TIASL of graph with respect to the discrete topology of $X$.}\label{G-TIASL2a}
\end{figure}

Since the necessary and sufficient condition for a graph to admit a TIASL with respect to the discrete topology of  ground set $X$ is that $G$ has at least $2^{|X|-1}$ pendant vertices that incident at a single vertex of $G$, no paths $P_n; ~n\ge 3$, cycles, complete graphs and complete bipartite graphs can have a TIASL with respect to discrete topology of $X$. 

\noindent Theorem \ref{P-TIASI2} gives rise to the following results also. 

\begin{corollary}
A graph on even number of vertices does not admit a TIASL with respect to the discrete topology of the ground set $X$.
\end{corollary}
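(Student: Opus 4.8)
The plan is to argue by a straightforward cardinality (parity) count, leaning directly on the characterisation already established in Theorem \ref{T-TIASI2}. The crucial observation is that, for a TIASL with respect to the discrete topology, the vertex labels are forced to exhaust all non-empty subsets of the ground set $X$, so the number of vertices is completely determined by $|X|$.

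First I would recall that if $G$ admits a TIASL $f$ with respect to the discrete topology $\mathcal{T}$ of $X$, then by definition $f(V(G))\cup\{\emptyset\}=\mathcal{T}=\mathcal{P}(X)$, and hence $f(V(G))=\mathcal{P}(X)-\{\emptyset\}$. Since $f$ is injective (being a set-labeling), it is a bijection from $V(G)$ onto the collection of non-empty subsets of $X$. Consequently $|V(G)|=|\mathcal{P}(X)-\{\emptyset\}|=2^{|X|}-1$, so the order of $G$ is rigidly fixed once $|X|$ is fixed.

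Next I would note that $X$ is a non-empty finite set, so $|X|\ge 1$ and $2^{|X|}$ is even; therefore $2^{|X|}-1$ is odd. It follows that every graph admitting a TIASL with respect to the discrete topology of $X$ must have an odd number of vertices. Taking the contrapositive, a graph on an even number of vertices can never satisfy $|V(G)|=2^{|X|}-1$, and hence cannot admit a TIASL with respect to the discrete topology.

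There is no genuine obstacle in this argument; it is a direct counting consequence of Theorem \ref{T-TIASI2}. The only point requiring a little care is the initial step—recognising that the discrete topology \emph{uses up} every non-empty subset of $X$, which is precisely what pins down the vertex count and forces the parity. Everything after that is an immediate parity observation.
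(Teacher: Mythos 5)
Your proof is correct and takes essentially the same approach as the paper: the discrete topology forces $f(V(G))=\mathcal{P}(X)-\{\emptyset\}$, hence $|V(G)|=2^{|X|}-1$, which is odd. (Your count is in fact stated more carefully than the paper's own proof, which miswrites the order as $2^{|X|-1}$ — a quantity that is even for $|X|\ge 2$ — where $2^{|X|}-1$ is clearly intended.)
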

\begin{proof}
If a graph on $n$ vertices admits a TIASL with respect to the discrete topology of the ground set $X$, then by Theorem \ref{T-TIASI2}, $n=2^{|X|-1}$, which can never be an even integer. Therefore, $G$ on even number of vertices does not admit a TIASL with respect to the discrete topology of $X$.
\end{proof}

\begin{corollary}
A star graph $K_{1,r}$ admits a TIASL with respect to the discrete topology of the ground set $X$, if and only if $r=2^{|X|}-2$.
\end{corollary}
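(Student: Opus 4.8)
The plan is to reduce everything to a vertex count combined with Theorem~\ref{T-TIASI2}. I would first recall that the discrete topology of $X$ is $\mathcal{P}(X)$, so any TIASL realising it must have $f(V(G))=\mathcal{P}(X)-\{\emptyset\}$; since $f$ is injective, the host graph then has exactly $2^{|X|}-1$ vertices, one for each non-empty subset of $X$.

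For necessity, I would suppose $K_{1,r}$ admits such a TIASL. As $K_{1,r}$ has $r+1$ vertices, the count above forces $r+1=2^{|X|}-1$, i.e. $r=2^{|X|}-2$. This direction is immediate once the vertex count is established.

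For sufficiency, I would assume $r=2^{|X|}-2$, so that $K_{1,r}$ has $2^{|X|}-1$ vertices and its $r$ leaves are pendant vertices all adjacent to the single centre, with $r=2^{|X|}-2\ge 2^{|X|-1}$ for $|X|\ge 2$. This is exactly the hypothesis of Theorem~\ref{T-TIASI2}, which then yields a TIASL with respect to the discrete topology. Concretely, one labels the centre by $\{0\}$ and the leaves by the remaining $2^{|X|}-2$ non-empty subsets; each edge-label is $\{0\}+f(v)=f(v)\subseteq X$, so $f$ is a valid IASL, and $f(V)\cup\{\emptyset\}=\mathcal{P}(X)$ is discrete.

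The step needing the most care is locating the set $\{0\}$ in the sufficiency direction: a leaf cannot carry $\{0\}$, since it has a single neighbour and hence cannot be adjacent to all $2^{|X|-1}$ vertices whose labels contain the maximal element of $X$, which Proposition~\ref{P-TIASI2} forces to be pendant neighbours of the $\{0\}$-vertex. Thus $\{0\}$ must sit on the centre, after which the remaining subsets can be distributed freely among the leaves. I would also flag the degenerate boundary case $|X|=1$ (where $r=0$ and $K_{1,0}\cong K_1$) separately, since there the discrete and indiscrete topologies coincide.
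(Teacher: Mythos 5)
Your proposal is correct and follows essentially the same route as the paper: necessity by counting vertices against $|\mathcal{P}(X)-\{\emptyset\}|=2^{|X|}-1$, and sufficiency by the explicit labeling that places $\{0\}$ at the centre and distributes the remaining non-empty subsets of $X$ over the leaves, so that every edge-label is $\{0\}+f(v)=f(v)\subseteq X$. Your additional observations (why $\{0\}$ cannot sit on a leaf, and the degenerate case $|X|=1$) are sound refinements but do not change the argument.
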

\begin{proof}
First assume that the star graph $G=K_{1,r}$ admits a TIASL $f$ with respect to the discrete topology of the ground set $X$. Then, $f(V(G))=\mathcal{P}(X)-\{\emptyset\}$. That is, $|f(V(G))|=2^{|X|}-1$. Hence, $G$ must have $2^{|X|}-1$ vertices. That is, $r+1=2^{|X|}-1$. Therefore, $r=2^{|X|}-2$.

Conversely, consider a star graph $G=K_{1,r}$, where $r=2^n-2$ for some positive integer $n$. Choose a set $X$ with cardinality $n$, which consists of the element $0$. Note that the number of non-empty subsets of $X$ is $2^n-1$. Define a set-labeling $f$ of $G$ which assigns $\{0\}$ to the central vertex of $G$ and the other non-empty subsets of $X$ to the pendant vertices of $G$. Clearly, this labeling is an IASL of $G$. Also, $f(V(G))=\mathcal{P}(X)-\{\emptyset\}$. Therefore, $f$ is a TIASL of $G$ with respect to the discrete topology of $X$.
\end{proof}

Figure \ref{G-TIASL2b} illustrates the existence of a TIASL with respect to the discrete topology of the ground set $X$ for a star graph.

\begin{figure}[h!]
\begin{center}
\includegraphics[scale=0.35]{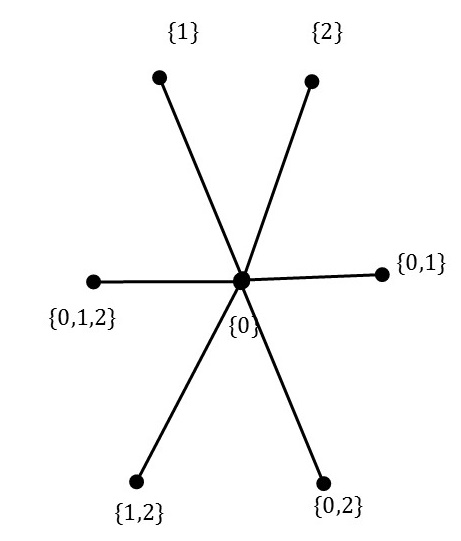}
\caption{\small \sl }\label{G-TIASL2b}
\caption{\small A Star grph with a TIASL with respect to the discrete topology of $X$.}
\end{center}  
\end{figure}

\section{Conclusion}

In this paper, we have discussed the concepts and properties of topological integer additive set-indexed graphs analogous to those of topological IASI graphs and have done a characterisation based on this labeling.

We note that the admissibility of topological integer additive set-indexers by the given graphs depends also upon the number and nature of the elements in $X$ and the topology $\mathcal{T}$ of $X$ concerned.  Hence, choosing a ground set $X$ is very important in the process of checking whether a given graph admits a TIASL-graph.

Certain problems in this area are still open. Some of the areas which seem to be promising for further studies are listed below.

\begin{problem}{\rm
Characterise different graph classes which admit topological integer additive set-labelings.}
\end{problem}

\begin{problem}{\rm
Estimate the topological set-indexing number of different graphs and graph classes which admit topological integer additive set-labelings.} 
\end{problem}

\begin{problem}{\rm
Verify the existence of topological integer additive set-labelings for different graph operations and graph products.} 
\end{problem}

\begin{problem}{\rm
Establish the necessary and sufficient condition for a graph to admit topological integer additive set-indexer.}
\end{problem}

\begin{problem}{\rm
Characterise the graphs and graph classes which admit TIASI.}
\end{problem}

The integer additive set-indexers under which the vertices of a given graph are labeled by different standard sequences of non negative integers, are also worth studying. All these facts highlight a wide scope for further studies in this area.

\section*{Acknowledgement}

The authors would like to thank the anonymous reviewer for his/her insightful suggestions and critical and constructive remarks which made the overall presentation of this paper better.

\end{document}